\providecommand{\U}[1]{\protect\rule{.1in}{.1in}}
\providecommand{\U}[1]{\protect\rule{.1in}{.1in}}
\providecommand{\U}[1]{\protect\rule{.1in}{.1in}} \textwidth 16.3cm
\theoremstyle{plain}
\newtheorem{theorem}{Theorem}[section]
\newtheorem{corollary}[theorem]{Corollary}
\newtheorem{example}[theorem]{Example}
\newtheorem{remark}[theorem]{Remark}
\numberwithin{equation}{section}
\begin{document}
\title{Some properties of almost summing operators}
	\author[Renato Macedo]{Renato Macedo}
	\address{Departamento de Matem\'{a}tica \\
		Universidade Federal da Para\'{\i}ba \\
		58.051-900 - Jo\~{a}o Pessoa, Brazil.}
	\email{renato.burity@academico.ufpb.br}
	\author[Joedson Santos]{Joedson Santos}
	\address{Departamento de Matem\'{a}tica \\
		Universidade Federal da Para\'{\i}ba \\
		58.051-900 - Jo\~{a}o Pessoa, Brazil.}
	\email{joedson.santos@academico.ufpb.br}
	\thanks{2020 Mathematics Subject Classification: Primary 46B45, 47B01; Secondary 47B10, 47L20}
	\thanks{Renato Macedo is partially supported by Capes and Joedson Santos is supported by CNPq, Grant 2019/0014 Para\'{\i}ba State Research Foundation (FAPESQ) and Public Call n. 03 Produtividade em
		Pesquisa PROPESQ/PRPG/UFPB PIA13194-2020.}
	\keywords{absolutely $p$-summing operators, Cohen strongly $p$-summing operators, almost 
		$p$-summing operators}
	\maketitle
	
	\begin{abstract}
		In this paper we extend the scope of three important results of the linear theory of absolutely summing operators. The first one was proved by Bu and Kranz in \cite{BK} and it asserts that a continuous linear operator between Banach spaces takes almost unconditionally summable sequences into Cohen strongly $q$-summable sequences for any $q\geq2$, whenever its adjoint is $p$-summing for some $p\geq1$. The second of them states that $p$-summing operators with hilbertian domain are Cohen strongly $q$-summing operators ($1<p,q<\infty$), this result is due to Bu \cite{Bu}. The third one is due to  Kwapie\'{n} \cite{Kwapien} and it characterizes spaces isomorphic to a Hilbert space using 2-summing operators. We will show that these results are maintained replacing the hypothesis of the operator to be $p$-summing by almost summing. We will also give an example of an almost summing operator that fails to be $p$-summing for every $1\leq p< \infty$.

	\end{abstract}
	
\section{Introduction}

The space of all continuous linear operators from a Banach space
$X$ to a Banach space $Y$ will be denoted by $\mathcal{L}(X,Y)$. The closed unit ball of $X$ is denoted by $B_{X}$ and its topological dual by $X^*$. The symbol $u^*$ represents the adjoint of a continuous linear operator $u$. Below, $r_{i}$ are the Rademacher functions and if $1\leq p\leq \infty$ we denote by $p^*$ the conjugate of $p$, i.e., $\frac{1}{p} + \frac{1}{p^*} = 1$. For a Banach space $X$, the spaces

$\ell_{p}(X):=\{  (x_{i})_{i=1}^{\infty}\subset X:\Vert (x_{i}%
)_{i=1}^{\infty}\Vert _{p}:=(\sum_{i=1}^{\infty}
\left\Vert x_{i}\right\Vert ^{p})  ^{1/p}<\infty\}$,

$\ell_{p}^{w}(X):=\{  (x_{i})_{i=1}^{\infty}\subset X:\left\Vert
(x_{i})_{i=1}^{\infty}\right\Vert _{w,p}:=\sup_{x^* \in B_{X^*}}\|(x^*(x_i))_{i=1}^\infty\|_p<\infty\}$,

$\ell_p\langle X \rangle  := \{(x_i)_{i=1}^\infty \subset X: \|(x_i)_{i=1}^\infty\|_{C,p}:= \sup_{(x_i^*)_
	{i=1}^\infty \in B_{\ell_{p^*}^w(X^*)}} \|(x_i^*(x_i))_{i=1}^\infty\|_1< \infty\}$ and

$ Rad(X):=\{(x_i)_{i=1}^\infty \subset X:\|(x_i)_{i=1}^\infty\|_{Rad(X)}:=\left({\displaystyle\int\nolimits_{0}^{1}%
}\left\Vert\sum_{i=1}^{\infty}r_i(t)x_i\right\Vert^2 dt\right)^{1/2}< \infty\}$ \\
are, respectively, the spaces of {\it absolutely, weakly, Cohen strongly (or strongly) $p$-summable and almost unconditionally summable $X$-valued sequences} (see \cite{Cohen, djt}).

If $1\leq p<\infty,$ we say that a linear operator
$u:X\rightarrow Y$ is \textit{absolutely} $p$-\textit{summing} (or $p$-\textit{summing}) if $\left(  u(x_{i})\right)
_{i=1}^{\infty}\in\ell_{p}(Y)$ whenever $(x_{i})_{i=1}^{\infty}\in\ell
_{p}^{w}(X)$. The class of absolutely $p$-summing linear operators from $X$ to $Y$ will be represented by $\Pi_{p}\left(  X,Y\right)  $ (see \cite{djt}).

In \cite{Cohen} Cohen introduced a class of operators which characterizes the $p^*$-summing adjoint operators. If $1< p\leq\infty,$ we say that a linear operator $u$ from $X$ to $Y$ is {\it Cohen strongly $p$-summing} (or {\it strongly $p$-summing})  if $\left(  u(x_{i})\right)
_{i=1}^{\infty}\in\ell_{p}\langle Y \rangle$ whenever $(x_{i})_{i=1}^{\infty}\in\ell
_{p}(X)$. The class of Cohen strongly $p$-summing linear operators from $X$ to $Y$ will be denoted by $\mathcal{D}_{p}(X,Y)$. An equivalent formulation asserts that $u \in \mathcal{L}(X,Y)$ is Cohen strongly $p$-summing if there is a constant $C\geq0$ such that
\[
\sum_{i=1}^{m}\vert y_{i}^{*}(u(x_i))\vert\leq C\cdot\left\Vert
(x_{i})_{i=1}^{m}\right\Vert _{p}\cdot\|(y^{*}_{i})_{i=1}^{m}\|_{p^*,w}
\]
for all $x_{1},...,x_{m}\in X$, $y_{1}^*,...,y_{m}^* \in Y^*$ and all positive integers $m$.

According to \cite{BBJ}, a linear operator $u\in\mathcal{L}\left(  X;Y\right)$ is called to be \textit{almost} $p$-\textit{summing}, $1\leq p<\infty,$ if there is a constant $C\geq0$ such that 
\begin{equation}\label{almostp}
\left({\displaystyle\int\nolimits_{0}^{1}%
}\left\Vert\sum_{i=1}^{m}r_i(t)u(x_i)\right\Vert^2 dt\right)^{\frac{1}{2}}  \leq C\cdot\left\|(x_i)_{i=1}^{m}\right\|_{w,p}
\end{equation}
for every $m\in \mathbb{N}$ and $x_{1},...,x_{m}\in X$. The smallest $C$ which satisfies the inequality is named the \textit{almost summing norm} of $u$ denoted by $\pi_{al.s.p}(u)$. The class of all almost summing operators from $X$ to $Y$ is denoted by $\Pi_{al.s.p}\left(  X,Y\right)$. When $p=2$, these operators are simply called \textit{almost summing} and we write $\Pi_{al.s}$ instead of $\Pi_{al.s.2}$ (see
\cite[Chapter 12]{djt}). By \cite[Proposition 12.5]{djt},
\begin{equation}
\bigcup_{1\leq p<\infty}\Pi_{p}(X,Y)\subseteq\Pi_{al.s}(X,Y).\label{inccc}%
\end{equation}

Now we will give an example of an almost summing operator that fails to be $p$-summing for every $1\leq p< \infty$. We provide this example because we have found no information in this direction in the literature to quote.

\begin{example}
	
Consider the following Banach space
\[
Y=\left(  \sum_{k=3}^{\infty}\ell_{k}\right)  _{1}:=\left\{(x_{k})_{k=3}^{\infty}; x_{k}\in \ell_{k}\ \mbox{and}\  \Vert (x_{k})_{k
	=3}^{\infty}\Vert_1={\displaystyle\sum\limits_{k=3}^{\infty}}
\left\Vert x_{k}\right\Vert_{k}\right\}%
\]
and let
\[
T_{k}:\ell_{1}\rightarrow\ell_{k}%
\]
be operators such that $T_{k}$ fails to be absolutely $k$-summing for all $k$ $=3,4,5,\ldots$ (see \cite[Proposition 5.2(v)]{Bennett}).
On the other hand, for all $k$ $=3,4,5,\ldots$ (see \cite[Proposition 5.2(iii)]{Bennett})
we have%
\[
\Pi_{k+1}\left(  \ell_{1},\ell_{k}\right)  =\mathcal{L}\left(  \ell
_{1},\ell_{k}\right)
\]
and thus%
\[
T_{k}\in\Pi_{k+1}\left(  \ell_{1},\ell_{k}\right)\subset\Pi_{al.s}\left(  \ell_{1},\ell_{k}\right).
\]

\bigskip For all $k=3,4,5,\ldots,$ let%
\[
i_{k}:\ell_{k}\rightarrow Y
\]
be the natural inclusions%
\[
i_{k}\left(  \left(  a_{j}\right)  _{j=1}^{\infty}\right)  =\left(
0,\ldots,0,\left(  a_{j}\right)  _{j=1}^{\infty},0,0,\ldots\right)  ,
\]
with $\left(  a_{j}\right)  _{j=1}^{\infty}$ at the $(k-2)  $-th
position. Note that $\left\Vert i_{k}\right\Vert =1$.

Let%
\[
T:\ell_{1}\rightarrow Y
\]
be
\[
T\left(  z\right)  =%
{\displaystyle\sum\limits_{k=3}^{\infty}}
\frac{\left(  i_{k}\circ T_{k}\right)  (z)}{C_{k}},
\]
where
\[
C_{k}=2^{k}\cdot \pi_{al.s}(T_{k}).
\]

Note that $T$ is well-defined, linear and continuous. In fact, as $\left\Vert T_{k}\right\Vert \leq \pi_{al.s}(T_{k})$ we obtain
\begin{align*}%
	{\displaystyle\sum\limits_{k=3}^{\infty}}
	\left\Vert \frac{i_{k}\circ T_{k}}{C_{k}}\right\Vert  & =%
	{\displaystyle\sum\limits_{k=3}^{\infty}}
	\frac{1}{C_{k}}\left\Vert i_{k}\circ T_{k}\right\Vert \\
	& \leq%
	{\displaystyle\sum\limits_{k=3}^{\infty}}
	\frac{1}{C_{k}}\left\Vert i_{k}\right\Vert \left\Vert T_{k}\right\Vert \\
	& =%
	{\displaystyle\sum\limits_{k=3}^{\infty}}
	\frac{\left\Vert T_{k}\right\Vert }{2^{k}\cdot \pi_{al.s}(T_{k})}\\
	& <\infty.
\end{align*}
Thus, we conclude that
\[
T:=%
{\displaystyle\sum\limits_{k=3}^{\infty}}
\frac{i_{k}\circ T_{k}}{C_{k}}%
\]
is well-defined in $\mathcal{L}\left(  \ell_{1},Y\right)  .$ 

Since $T_{k}$ fails to be absolutely $k$-summing, it is simple to prove that
\[
T\notin%
{\displaystyle\bigcup\limits_{k\in \{3,4,\ldots\}}}
\Pi_{k}\left(  \ell_{1},Y\right)=%
{\displaystyle\bigcup\limits_{p\geq1}}
\Pi_{p}\left(  \ell_{1},Y\right).
\]
In fact, it suffices to note that%
\[
\left\Vert T_{k}\left(  x\right)  \right\Vert _{k}\leq C_{k}\cdot\left\Vert T\left(  x\right)  \right\Vert _{1}.
\]

It remains to prove that $T~$is almost summing. Since $T_{k}\in\Pi_{al.s}\left(  \ell_{1},\ell_{k}\right)$, by the ideal property, we have%
\[
i_{k}\circ T_{k}\in\Pi_{al.s}\left(  \ell_{1},Y\right).
\]
Note that (since $\left\Vert i_{k}\right\Vert =1$),
\begin{align*}%
	{\displaystyle\sum\limits_{k=3}^{\infty}}
	\pi_{al.s} \left( \frac{i_{k}\circ T_{k}}{C_{k}}\right)  & =%
	{\displaystyle\sum\limits_{k=3}^{\infty}}
	\frac{1}{C_{k}}\cdot\pi_{al.s} \left(i_{k}\circ T_{k}\right)\\
	& \leq%
	{\displaystyle\sum\limits_{k=3}^{\infty}}
	\frac{1}{C_{k}}\cdot\left\Vert i_{k}\right\Vert\cdot\pi_{al.s} \left( T_{k}\right)\\
	& =%
	{\displaystyle\sum\limits_{k=3}^{\infty}}
	\frac{1}{2^{k}\cdot\pi_{al.s} \left( T_{k}\right)}\cdot\pi_{al.s} \left( T_{k}\right)\\
	& <\infty.
\end{align*}
Since $\Pi_{al.s}\left(  \ell_{1},Y\right)$ is a Banach space, we conclude that
\[%
{\displaystyle\sum\limits_{k=3}^{\infty}}
\frac{i_{k}\circ T_{k}}{C_{k}}%
\]
converges in $\Pi_{al.s}\left(  \ell_{1},Y\right)$. 
\end{example}


A Banach space $X$ has \textit{type} $p$, with $1\leq p\leq2$, if there is a constant $C\geq0$ such that, for all $m\in \mathbb{N}$ and $x_{1},...,x_{m}\in X$, 
\begin{equation*}\label{typep}
	\left({\displaystyle\int\nolimits_{0}^{1}%
	}\left\Vert\sum_{i=1}^{m}r_i(t)x_i\right\Vert^2 dt\right)^{\frac{1}{2}}  \leq C\cdot\left\|(x_i)_{i=1}^{m}\right\|_{p}.
\end{equation*}
The infimum of the $C$ is denoted by $T_p(X)$.

We should mention here Khinchin's inequality and Kahane's inequality:

\textbf{Khinchin inequality.} Let $0<p<\infty$. Then, there are positive constants $A_p$ and $B_p$ for which
	\[
	\mathrm{A}_{p}\left(  \sum_{i=1}^{m}\left\vert x_{i}\right\vert^{2}\right)
	^{\frac{1}{2}}\leq\left(  \int_{0}^{1}\left\vert \sum_{i=1}^{m}r_{i}(t)x_{i}\right\vert
	^{p}dt\right)  ^{\frac{1}{p}}\leq\mathrm{B}_{p}\left(  \sum_{i=1}^{m}\left\vert x_{i}\right\vert^{2}\right)
	^{\frac{1}{2}}
	\]
	holds, regardless of the choice of finitely many scalars $x_{1},\dots,x_{m}$.

	\textbf{Kahane inequality.} Let $0<p,q<\infty$. Then, there is a constant $\mathrm{K}%
	_{p,q}>0$ for which
	\[
	\left(  \int_{0}^{1}\left\Vert \sum_{i=1}^{m}r_{i}(t)x_{i}\right\Vert
	^{q}dt\right)  ^{\frac{1}{q}}\leq\mathrm{K}_{p,q}\left(  \int_{0}%
	^{1}\left\Vert \sum_{i=1}^{m}r_{i}(t)x_{i}\right\Vert ^{p}dt\right)
	^{\frac{1}{p}}
	\]
	holds, regardless of the choice of a Banach space $X$ and of finitely many
	vectors $x_{1},\dots,x_{m}\in X$.

Using strong tools as Pietsch domination theorem and Khinchin and Kahane inequalities, the main result proved by Bu and Kranz in \cite{BK} was:

\begin{theorem}\cite[Theorem 1]{BK} Let $X$ and $Y$ be Banach spaces and $u$ be a continuous linear operator from $X$ to $Y$. If $u^*$ is $p$-summing for some $p\geq1$, then for any $q\geq2$, $u$ takes almost unconditionally summable sequences in $X$ into members of $\ell_{q}\langle Y\rangle$.
\end{theorem}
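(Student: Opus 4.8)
The plan is to verify directly that $(u(x_i))_{i=1}^\infty$ lies in $\ell_q\langle Y\rangle$ by estimating its Cohen norm. Fix an almost unconditionally summable sequence $(x_i)_{i=1}^\infty \in Rad(X)$ and a sequence $(y_i^*)_{i=1}^\infty \in B_{\ell_{q^*}^w(Y^*)}$; by the definition of $\|\cdot\|_{C,q}$ it suffices to bound $\sum_{i=1}^m |y_i^*(u(x_i))|$ independently of $m$ and of the choice of $(y_i^*)$. Writing $y_i^*(u(x_i)) = u^*(y_i^*)(x_i)$ and choosing unimodular scalars $\theta_i$ with $\theta_i\, u^*(y_i^*)(x_i) = |u^*(y_i^*)(x_i)|$, I set $z_i^* := \theta_i y_i^*$, noting that $\|(z_i^*)_{i=1}^m\|_{w,q^*} = \|(y_i^*)_{i=1}^m\|_{w,q^*} \leq 1$.

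The key manipulation is to recover the sum as a Rademacher integral. Using the orthonormality relation $\int_0^1 r_i(t) r_j(t)\,dt = \delta_{ij}$, I would write
\[
\sum_{i=1}^m |y_i^*(u(x_i))| = \sum_{i=1}^m u^*(z_i^*)(x_i) = \int_0^1 \left(\sum_{i=1}^m r_i(t) u^*(z_i^*)\right)\left(\sum_{j=1}^m r_j(t) x_j\right) dt.
\]
Applying the duality bound $|\phi(x)| \leq \|\phi\|_{X^*}\|x\|_X$ inside the integral, followed by the Cauchy--Schwarz inequality in $L^2[0,1]$, then yields
\[
\sum_{i=1}^m |y_i^*(u(x_i))| \leq \left\|(u^*(z_i^*))_{i=1}^m\right\|_{Rad(X^*)} \cdot \left\|(x_j)_{j=1}^m\right\|_{Rad(X)}.
\]
The second factor is at most $\|(x_j)_{j=1}^\infty\|_{Rad(X)} < \infty$, so everything reduces to controlling the first factor.

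Here is where the hypothesis enters. Since $u^*$ is $p$-summing, the inclusion \eqref{inccc} gives $u^* \in \Pi_{al.s}(Y^*, X^*)$, i.e.\ $u^*$ is almost (that is, $2$-)summing, so that
\[
\left\|(u^*(z_i^*))_{i=1}^m\right\|_{Rad(X^*)} \leq \pi_{al.s}(u^*)\, \|(z_i^*)_{i=1}^m\|_{w,2}.
\]
Because $q \geq 2$ forces $q^* \leq 2$, the containment $\ell_{q^*}^w(Y^*) \subseteq \ell_2^w(Y^*)$ holds with $\|\cdot\|_{w,2} \leq \|\cdot\|_{w,q^*}$, whence $\|(z_i^*)_{i=1}^m\|_{w,2} \leq \|(z_i^*)_{i=1}^m\|_{w,q^*} \leq 1$. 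Combining the displays gives $\sum_{i=1}^m |y_i^*(u(x_i))| \leq \pi_{al.s}(u^*)\,\|(x_j)_{j=1}^\infty\|_{Rad(X)}$, a bound uniform in $m$ and in $(y_i^*) \in B_{\ell_{q^*}^w(Y^*)}$; taking the supremum shows $\|(u(x_i))\|_{C,q} \leq \pi_{al.s}(u^*)\,\|(x_i)\|_{Rad(X)}$ and hence $(u(x_i)) \in \ell_q\langle Y\rangle$.

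The step I expect to be most delicate is not any single inequality but the realization that the full strength of $p$-summability is unnecessary: the only property of $u^*$ actually used is that it is almost summing, which is exactly the weakening the paper is after. The technical points requiring care are the Rademacher-orthogonality rewriting of the Cohen sum together with the passage from finite truncations to the infinite sequence, and the verification that the weak-$q^*$ norm is genuinely dominated by the weak-$2$ norm in the regime $q \geq 2$; this exponent bookkeeping is what makes the hypothesis $q \geq 2$ indispensable.
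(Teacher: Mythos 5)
Your proposal is correct and follows essentially the same route as the paper: the paper proves the stronger Theorem~\ref{theorinclal.s-2} using exactly your sign-trick, Rademacher orthogonality, Cauchy--Schwarz, and the almost-summing estimate on $u^*$, then obtains the case $q\geq 2$ via the inclusion $\ell_{q^*}^{w}(Y^*)\subseteq\ell_{2}^{w}(Y^*)$ (Corollary~\ref{improviment}) and deduces the Bu--Kranz statement from (\ref{inccc}). Your write-up merely inlines these three steps into a single argument, including the key observation, which is precisely the paper's point, that only the almost summability of $u^*$ is ever used.
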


Let $X$ be a Hilbert space. Cohen in \cite{Cohen70} has shown that
\begin{equation}\label{cohenresult}
	\Pi_{2}\left(  X,Y\right)\subseteq\mathcal{D}_{2}\left(  X,Y\right)\ \mbox{for all Banach space}\ Y.
\end{equation}
In \cite{Bu}, Bu showed that (\ref{cohenresult}) is valid with no restrictions of the parameters $p,q\in (1,\infty)$ instead of $p = q = 2$. Cohen \cite{Cohen70} also asked if (\ref{cohenresult}) characterizes spaces isomorphic to a Hilbert spaces. Kwapie\'{n} \cite{Kwapien} proved that this question has a positive answer. These important results are as follows:

\begin{theorem}\cite[Main Theorem]{Bu} \label{teorbu} Let $1<p,q<\infty$, and let $X$ be a Hilbert space and $Y$ be Banach space. Then \[\Pi_{p}\left(  X,Y\right)\subseteq\mathcal{D}_{q}\left(  X,Y\right).\]
\end{theorem}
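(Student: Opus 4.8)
The plan is to reduce the general statement to the diagonal case $p=q=2$ furnished by \eqref{cohenresult}, by showing that on a Hilbert domain the exponent $p$ is irrelevant and that the entire $q$-dependence can be pushed onto a single Hilbert--Schmidt operator, which turns out to be strongly $q$-summing for \emph{every} $q$. First I would dispose of $p$. Since $X$ is a Hilbert space it has cotype $2$, so $\Pi_p(X,Y)=\Pi_2(X,Y)$: for $p\le 2$ this is the inclusion theorem, while for $p>2$ it is the cotype-$2$ reduction of the summing exponent. Consequently every $u\in\Pi_p(X,Y)$ is already $2$-summing, and it suffices to prove $\Pi_2(X,Y)\subseteq\mathcal{D}_q(X,Y)$ for all $1<q<\infty$.

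Next comes the factorization. Applying the Pietsch Domination Theorem to $u\in\Pi_2(X,Y)$ yields a Borel probability measure $\mu$ on $(B_{X^*},w^*)$ and a constant $C\geq 0$ with $\|u(x)\|\le C\bigl(\int|\langle x,z\rangle|^2\,d\mu(z)\bigr)^{1/2}$, where I have identified $X^*$ with $X$ and written $z$ for the integration variable. The right-hand side equals $C\,\langle Sx,x\rangle^{1/2}=C\,\|S^{1/2}x\|$ for the positive trace-class operator $S$ on $X$ determined by $\langle Sx,x\rangle=\int|\langle x,z\rangle|^2\,d\mu(z)$. Hence $u$ factors as $u=v\circ w$, where $w:=S^{1/2}$ is a positive, self-adjoint Hilbert--Schmidt operator on $X$ and $v$ is a bounded operator with $\|v\|\le C$ defined on $\overline{\mathrm{ran}}\,w$.

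The heart of the argument is that $w$ is strongly $q$-summing for every $q$. Indeed $w$ is Hilbert--Schmidt, so $w^*=w$ lies in $\Pi_2(X,X)=\Pi_{q^*}(X,X)$, since all summing exponents coincide with the Hilbert--Schmidt class when both spaces are Hilbert; by Cohen's duality ($u\in\mathcal{D}_q\iff u^*\in\Pi_{q^*}$) this gives $w\in\mathcal{D}_q(X,X)$ for every $1<q<\infty$. With this in hand I assemble the required estimate: for finite families $(x_i)\subset X$ and $(y_i^*)\subset Y^*$,
\[
\sum_i|y_i^*(u x_i)|=\sum_i|\langle w x_i,\,v^*y_i^*\rangle|\le \|(w x_i)_i\|_{C,q}\,\|(v^*y_i^*)_i\|_{w,q^*},
\]
using the defining pairing of $\ell_q\langle X\rangle$ with $\ell_{q^*}^w(X)$. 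The first factor is at most $\|w\|_{\mathcal{D}_q}\,\|(x_i)_i\|_q$ because $w\in\mathcal{D}_q(X,X)$, and the second is at most $\|v\|\,\|(y_i^*)_i\|_{w,q^*}$ because $\langle v^*y_i^*,h\rangle=y_i^*(vh)$ with $vh\in\|v\|B_Y$. Combining, $\sum_i|y_i^*(u x_i)|\le C'\,\|(x_i)_i\|_q\,\|(y_i^*)_i\|_{w,q^*}$, which is precisely membership in $\mathcal{D}_q(X,Y)$.

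The delicate points are the reverse inclusion $\Pi_p\subseteq\Pi_2$ for $p>2$, which genuinely uses that $X$ is Hilbert via cotype $2$, and the verification that the Pietsch domination produces an honest Hilbert--Schmidt operator $w=S^{1/2}$ (this is where the hypothesis $p=2$, already secured, is essential, since a $p$-th power seminorm for $p\neq 2$ would not arise from a positive operator on $X$). Once these are in place, the conceptual crux is the observation that the whole $q$-dependence is carried by $w$ alone, and a Hilbert--Schmidt operator is strongly $q$-summing for all $q$ simultaneously. A minor technical check is that the Cohen and weak norms computed on $\overline{\mathrm{ran}}\,w$ agree with those in $X$; this is immediate, as the subspace is $1$-complemented in the Hilbert space $X$.
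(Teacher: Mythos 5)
Your proposal is correct, but it takes a genuinely different route from the paper. The paper never proves Theorem~\ref{teorbu} directly: it is quoted from \cite{Bu}, and the paper's own derivation of it (indeed of a stronger statement) comes in Section~3, where Theorem~\ref{teorbugeral} establishes $\Pi_{al.s.p}(X,Y)\subseteq\mathcal{D}_{q}(X,Y)$ for $\mathcal{L}_{p^{*}}$-space domains by a local argument — expand each $vx_i$ in the canonical basis of $\ell_{p^*}^n$, then apply H\"older, Khinchin and Kahane inequalities and the almost $p$-summing estimate — after which Bu's theorem follows from the inclusion $\Pi_{p}\subseteq\Pi_{al.s}$ of (\ref{inccc}) and the fact that Hilbert spaces are $\mathcal{L}_{2}$-spaces. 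You instead stay entirely inside classical Hilbert-space theory: reduction of the exponent $p$ to $2$ (inclusion theorem for $p\le 2$, Maurey's cotype-$2$ theorem for $p>2$), Pietsch domination to produce the factorization $u=v\circ w$ with $w=S^{1/2}$ Hilbert--Schmidt, the coincidence of the Hilbert--Schmidt class with every $\Pi_{r}$ on Hilbert spaces together with Cohen's duality to get $w\in\mathcal{D}_{q}(X,X)$ for all $q$, and a pairing estimate that is in effect the ideal property of $\mathcal{D}_{q}$. All of these steps are sound, and the delicate points you flag (the trace-class property of $S$, the $1$-complementation of $\overline{\mathrm{ran}}\,w$) are handled correctly. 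What each approach buys: yours is self-contained for Hilbert domains, avoids the Khinchin/Kahane machinery entirely, and isolates the structural reason the theorem holds (a $2$-summing operator on a Hilbert space factors through a Hilbert--Schmidt operator, which is strongly $q$-summing for every $q$ simultaneously); the paper's computation is engineered to generalize, since it applies verbatim to the strictly larger class of almost summing operators — for which no Pietsch domination is available, so your factorization argument cannot reach them — and to $\mathcal{L}_{p^{*}}$-space domains for arbitrary $p\ge 2$, which is precisely the improvement the paper is after.
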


\begin{theorem}\cite{Kwapien}\label{teorkwapien} The following properties of Banach space X are equivalent.
	
	(i) The space $X$ is isomorphic to a Hilbert space.
	
	(ii) For every Banach space $Y$, $\Pi_{2}(X,Y)\subseteq\mathcal{D}_{2}(X,Y)$.

\end{theorem}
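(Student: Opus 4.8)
The plan is to prove the two implications separately, treating $(i)\Rightarrow(ii)$ as the routine direction and $(ii)\Rightarrow(i)$ as the substantial one. For $(i)\Rightarrow(ii)$ I would exploit that both $\Pi_2$ and $\mathcal{D}_2$ are Banach operator ideals, so that $\beta u\alpha$ stays in the ideal whenever $u$ does and $\alpha,\beta$ are bounded. Fix an isomorphism $\phi\colon X\to H$ onto a Hilbert space $H$. Given $u\in\Pi_2(X,Y)$, the ideal property gives $u\circ\phi^{-1}\in\Pi_2(H,Y)$; Cohen's inclusion (\ref{cohenresult}) (equivalently \thmref{teorbu} with $p=q=2$) upgrades this to $u\circ\phi^{-1}\in\mathcal{D}_2(H,Y)$; and writing $u=(u\circ\phi^{-1})\circ\phi$ and applying the ideal property once more places $u\in\mathcal{D}_2(X,Y)$. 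Hence $\Pi_2(X,Y)\subseteq\mathcal{D}_2(X,Y)$ for every $Y$.

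For $(ii)\Rightarrow(i)$ I would first translate the hypothesis through Cohen's duality: since $\mathcal{D}_2$ is exactly the class of operators with $2$-summing adjoint, the inclusion in $(ii)$ says precisely that $u^*\in\Pi_2(Y^*,X^*)$ whenever $u\in\Pi_2(X,Y)$, for every Banach space $Y$. A preliminary point is to promote these set-theoretic inclusions to a norm estimate $\pi_2(u^*)\le C\,\pi_2(u)$, where $\pi_2$ denotes the $2$-summing norm. For each fixed target $Y$ the inclusion $\Pi_2(X,Y)\subseteq\mathcal{D}_2(X,Y)$ is an inclusion of Banach spaces, so the closed graph theorem already furnishes a constant $C_Y$ valid for all $2$-summing $u\colon X\to Y$; the task is then to select targets that simultaneously detect the relevant finite-dimensional geometry.

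The core of the argument is to feed finite families $x_1,\dots,x_n\in X$ into suitably chosen finite-rank test operators valued in $\ell_2^{\,n}$ (and, dually, operators whose adjoints probe $X^*$), to evaluate the $2$-summing norm of the test operator and of its adjoint, and to convert the resulting $\ell_2$-sums into Rademacher averages by Khinchin's inequality (with Kahane's inequality handling the vector-valued passage). The two estimates produced in this way are precisely the type $2$ and cotype $2$ inequalities for $X$, with constants controlled by $C$. Having established that $X$ carries both type $2$ and cotype $2$, I would conclude by invoking Kwapie\'{n}'s isomorphic characterization of Hilbert spaces: a Banach space enjoying type $2$ and cotype $2$ simultaneously is isomorphic to a Hilbert space, which is exactly $(i)$.

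I expect the main obstacle to be this middle step of the second implication, namely the precise design of the test operators and the bookkeeping of constants needed to read off \emph{both} the type $2$ and the cotype $2$ inequalities from a condition that, a priori, only controls one operator and its adjoint at a time; matching the correct target space to each of the two estimates, and ensuring that the various constants combine into a single geometric bound, is the delicate part. By contrast, the passage from summing norms to Rademacher averages is a direct application of the Khinchin and Kahane inequalities recalled above, and the final appeal to Kwapie\'{n}'s theorem is then immediate.
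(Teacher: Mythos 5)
Your first implication is fine: composing with the isomorphism and invoking Cohen's inclusion (\ref{cohenresult}) together with the ideal property of $\Pi_2$ and $\mathcal{D}_2$ is the routine argument, and the closed-graph remark opening the converse is also correct (uniformity of the constant over all targets $Y$ can be forced by an $\ell_2$-direct-sum trick, so that omission is minor). Note that the paper itself offers no proof of this theorem --- it is quoted from Kwapie\'{n} --- so your sketch has to stand on its own; and it does not, because the whole content of $(ii)\Rightarrow(i)$ is concentrated in precisely the step you leave undone, namely the construction of test operators that would extract \emph{both} the type $2$ and the cotype $2$ inequalities for $X$ from the hypothesis.

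That step is not ``delicate bookkeeping''; as described it cannot work. Every inequality the hypothesis yields has the $\ell_2$-sum on the dominating side: for $u\in\Pi_2(X,Y)$ one gets $\sum_i\vert y_i^*(u(x_i))\vert\leq C\,\pi_2(u)\,\Vert(x_i)\Vert_{\ell_2(X)}\Vert(y_i^*)\Vert_{w,2}$ and, via $\Pi_2\subseteq\Pi_{al.s}$ (see (\ref{inccc}) and \cite[Proposition 12.5]{djt}), $\Vert(u^*(y_j^*))\Vert_{Rad(X^*)}\leq C\,\pi_2(u)\,\Vert(y_j^*)\Vert_{w,2}$. These are type-shaped \emph{upper} bounds: taking $Y=\ell_2^n$, $u=\sum_j x_j^*\otimes e_j$ and $y_j^*=e_j$ one can legitimately extract type $2$ of $X^*$, hence cotype $2$ of $X$ --- but that is strictly weaker than $(i)$, since $X=L_{4/3}$ has a type $2$ dual and cotype $2$ yet is not hilbertian. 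The missing half (type $2$ of $X$, equivalently a cotype-type statement for $X^*$) is a \emph{lower} bound, and your mechanism produces none: the hypothesis bounds $\pi_2(u^*)$ above by $\pi_2(u)$ but never bounds $\pi_2(u)$ itself above by any Rademacher quantity. Concretely, the natural cotype test operator (norming functionals $x_i^*$, $u(x)=(\Vert x_i\Vert x_i^*(x))_i\in\ell_2^n$, $y_i^*=e_i$) gives $(\sum\Vert x_i\Vert^2)^{1/2}\leq C\pi_2(u)$, which helps only if $\pi_2(u)\leq C'\Vert(x_i)\Vert_{Rad(X)}$ --- an estimate that already encodes the cotype $2$ inequality being proved (it fails badly in $c_0$), so the argument is circular; and norming $\Vert(x_i)\Vert_{Rad(X)}$ by sequences in $Rad(X^*)$, which your type-$2$ step would need, is exactly K-convexity, not available a priori. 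This is why the known proofs --- including Kwapie\'{n}'s original one, which predates the notions of type and cotype --- take a different route: for each finite-dimensional $E\subseteq X$ one extends $2$-summing operators defined on $E$ to $X$ (2-summing operators are extendible with equal norm), applies the hypothesis, and uses trace duality (e.g.\ $\vert\mathrm{tr}(ab)\vert\leq\pi_2(a)\pi_2(b)$) to bound the Banach--Mazur distance $d(E,\ell_2^{\dim E})$ uniformly, which yields $(i)$. Without such an argument, or a genuine device for producing the lower-bound half, your outline has a hole exactly where the theorem lives.
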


The paper is organized as follows: In Section 2 our main result is an improvement of the result of Bu and Kranz \cite{BK} through a simpler argument than the original one. In Section 3, we extend the statement of the main result of Bu in \cite[Main Theorem]{Bu}. Finally, in Section 4, we show a Kwapie\'{n} type theorem using almost summing operators to characterize spaces isomorphic to a Hilbert space.

\section{Extending Bu-Kranz Theorem}

In the next theorem we will show that the Bu-Kranz Theorem is valid for almost $p^*$-summing operators instead of $p$-summing operators. The proof of this result is very simple and we will not use the Pietsch Domination Theorem, Khinchin's inequality and Kahane's inequality. Our main result is the following:

\begin{theorem}\label{theorinclal.s-2} Let $X$ and $Y$ be Banach spaces and $u$ be a continuous linear operator from $X$ to $Y$. If
 $u^*$ is almost $p^*$-summing for some $p\geq1$, then $u$ takes almost unconditionally summable sequences in $X$ into members of $\ell_{p}\langle Y\rangle$.
\end{theorem}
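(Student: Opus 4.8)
The plan is to verify the defining inequality of $\ell_p\langle Y\rangle$ directly. Fix a sequence $(x_i)_{i=1}^\infty \in Rad(X)$; to show $(u(x_i))_{i=1}^\infty \in \ell_p\langle Y\rangle$ it suffices, by the definition of $\|\cdot\|_{C,p}$, to bound $\sum_i |y_i^*(u(x_i))|$ uniformly over all $(y_i^*)_{i=1}^\infty \in B_{\ell_{p^*}^w(Y^*)}$. The starting move is to transfer everything to the adjoint via $y_i^*(u(x_i)) = (u^*y_i^*)(x_i)$, so that the hypothesis on $u^*$ can be brought to bear. Working throughout with finite sums and passing to the limit at the end avoids any convergence issues.

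The key step is a duality inequality between $Rad(X)$ and $Rad(X^*)$: for finite families $(a_i)\subset X$ and $(b_i)\subset X^*$, one has $\sum_i |b_i(a_i)| \le \|(a_i)\|_{Rad(X)}\,\|(b_i)\|_{Rad(X^*)}$. I would prove this by exploiting the orthonormality $\int_0^1 r_i(t)r_j(t)\,dt = \delta_{ij}$, which yields the identity
\[
\sum_i \epsilon_i\, b_i(a_i) = \int_0^1 \Big(\sum_i r_i(t)\epsilon_i b_i\Big)\Big(\sum_j r_j(t) a_j\Big)\,dt
\]
for any signs $\epsilon_i \in \{-1,1\}$, after which the Cauchy--Schwarz inequality in $L^2[0,1]$ splits the right-hand side as the product $\|(\epsilon_i b_i)\|_{Rad(X^*)}\,\|(a_i)\|_{Rad(X)}$. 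Since the $Rad$-norm is invariant under sign changes, choosing $\epsilon_i$ so that each term $\epsilon_i b_i(a_i)=|b_i(a_i)|$ delivers the claimed bound.

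Applying this with $a_i = x_i$ and $b_i = u^*y_i^*$ gives $\sum_i |y_i^*(u(x_i))| \le \|(x_i)\|_{Rad(X)}\,\|(u^*y_i^*)\|_{Rad(X^*)}$, and the almost $p^*$-summing hypothesis on $u^*$ bounds the last factor by $\pi_{al.s.p^*}(u^*)\,\|(y_i^*)\|_{w,p^*}$, the quantity $\|(u^*y_i^*)\|_{Rad(X^*)}$ being exactly the left side of inequality~\eqref{almostp} for $u^*$. Taking the supremum over $(y_i^*)\in B_{\ell_{p^*}^w(Y^*)}$ then shows $\|(u(x_i))\|_{C,p} \le \pi_{al.s.p^*}(u^*)\,\|(x_i)\|_{Rad(X)} < \infty$, which is precisely the assertion. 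The only genuine obstacle is the duality inequality of the second paragraph; everything else is bookkeeping with the definitions, which is why no Pietsch domination, Khinchin, or Kahane input is required.
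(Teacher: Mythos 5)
Your proposal is correct and follows essentially the same route as the paper: the core step in both is the orthonormality identity $\sum_i \epsilon_i b_i(a_i)=\int_0^1\bigl(\sum_i r_i(t)\epsilon_i b_i\bigr)\bigl(\sum_j r_j(t)a_j\bigr)\,dt$ followed by Cauchy--Schwarz in $L^2[0,1]$ and the defining inequality of almost $p^*$-summing operators applied to $u^*$. The only cosmetic difference is where the signs are absorbed --- you use invariance of the $Rad(X^*)$-norm under sign changes, while the paper attaches the signs $\theta_i$ to the $y_i^*$ and uses $\|(\theta_i y_i^*)\|_{p^*,w}=\|(y_i^*)\|_{p^*,w}$.
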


\begin{proof}
	Let $u \in \mathcal{L}(X,Y)$. For any $x_{1},...,x_{m} \in X$, any $y_{1}^*,...,y_{m}^* \in Y^*$ and taking $\theta_{i}=\mbox{sign}[y_{i}^{*}(u(x_i))]$, with $i=1,...,m$, it follows that
\begin{align*}
\sum_{i=1}^{m}\vert y_{i}^{*}(u(x_i))\vert & =  \left\vert\sum_{i=1}^{m} \theta_{i}y_{i}^{*}(u(x_i))\right\vert  \\
& =  \left\vert\sum_{i=1}^{m}u^* \left(\theta_{i}y_{i}^{*}\right)(x_i)\right\vert  \\
& = \left\vert{\displaystyle\int\nolimits_{0}^{1}%
} \sum_{i=1}^{m}r_i(t)u^{*}(\theta_{i}y_{i}^{*})\left(\sum_{i=1}^{m}r_i(t)x_i\right) dt\right\vert \\
& \leq \left( {\displaystyle\int\nolimits_{0}^{1}%
}\left\Vert\sum_{i=1}^{m}r_i(t)u^{*}(\theta_{i}y_{i}^{*})\right\Vert^{2} dt\right)^{1/2}\cdot \left( {\displaystyle\int\nolimits_{0}^{1}%
}\left\Vert\sum_{i=1}^{m}r_i(t)x_i\right\Vert^{2} dt\right)^{1/2}  \\
& \overset{(\ref{almostp})}{\leq}  \pi_{al.s.p^*}(u^{*})\cdot  \|(\theta_{i}y^{*}_{i})_{i=1}^{m}\|_{p^*,w}\cdot \|(x_{i})_{i=1}^{m}\|_{Rad(X)}\\
& =  \pi_{al.s.p^*}(u^{*})\cdot  \|(y^{*}_{i})_{i=1}^{m}\|_{p^*,w}\cdot \|(x_{i})_{i=1}^{m}\|_{Rad(X)}.
\end{align*}
Therefore, for each $(x_{i})_{i=1}^{\infty}\in Rad(X)$ and each $(y^{*}_{i})_{i=1}^{\infty}\in \ell_{p^*}^{w}(Y^*)$,
$$\sum_{i=1}^{\infty}\vert y_{i}^{*}(u(x_i))\vert<\infty.$$
That is, for each $(x_{i})_{i=1}^{\infty}\in Rad(X)$,  $(u(x_i))_{i=1}^{\infty}\in \ell_{p}\langle Y\rangle$.

\end{proof}

\begin{corollary}\label{improviment} Let $X$ and $Y$ be Banach spaces and $u$ be a continuous linear operator from $X$ to $Y$. If
	$u^*$ is almost summing, then for any $q\geq2$, $u$ takes almost unconditionally summable sequences in X into members of $\ell_{q}\langle Y\rangle$.
\end{corollary}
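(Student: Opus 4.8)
The plan is to derive Corollary~\ref{improviment} as a direct consequence of Theorem~\ref{theorinclal.s-2}, so no independent argument is really needed; the work is entirely in matching up parameters. The key observation is that the notion of almost $p$-summing is monotone in a convenient direction: if $u^*$ is almost summing (that is, almost $2$-summing), then the defining inequality \eqref{almostp} holds with the weak-$2$ norm on the right-hand side, and since $\|(x_i)\|_{w,r}$ is decreasing in $r$ (a larger exponent gives a smaller weak norm, by the inclusion $\ell_r^w(X)\subseteq \ell_s^w(X)$ for $r\leq s$ together with the norm estimate), being almost $2$-summing forces $u^*$ to be almost $p^*$-summing for every $p^*\geq 2$, i.e.\ for every $1\leq p\leq 2$. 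Thus I would first record that almost summing implies almost $p^*$-summing whenever $p^*\geq 2$.

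Next I would translate the conclusion ``$q\geq 2$'' into the hypothesis of the theorem. Given any $q\geq 2$, set $p=q$, so that $p^*=q^*\leq 2$; but this is the wrong direction, so instead I would set $p^*$ to be the conjugate index and observe that the theorem delivers membership in $\ell_p\langle Y\rangle$ where $p$ is the index conjugate to $p^*$. Concretely, for a target exponent $q\geq 2$ I want to invoke Theorem~\ref{theorinclal.s-2} with its parameter $p$ equal to $q$; this requires $u^*$ to be almost $q^*$-summing, and since $q\geq 2$ gives $q^*\leq 2\leq \dots$ I must instead arrange the inequality so that the relevant weak norm is weaker. The clean route is: for $q\geq 2$ we have $q^*\in(1,2]$, and almost $2$-summing implies almost $r$-summing for all $r\leq 2$ by the same monotonicity of the weak norms (now using that a smaller exponent gives a larger weak norm, so the inequality is only made easier to satisfy when we enlarge the right side). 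Hence $u^*$ almost summing implies $u^*$ is almost $q^*$-summing for every $q\geq 2$.

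With that in hand the corollary is immediate: apply Theorem~\ref{theorinclal.s-2} with $p=q$ (so that $p^*=q^*$); the hypothesis ``$u^*$ is almost $p^*$-summing for some $p\geq 1$'' is satisfied for this particular $p=q$, and the conclusion of the theorem is exactly that $u$ sends $Rad(X)$ into $\ell_p\langle Y\rangle=\ell_q\langle Y\rangle$. Since $q\geq 2$ was arbitrary, the corollary follows.

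I expect the only subtle point to be getting the monotonicity in the exponent pointing the correct way, since it is easy to confuse the inclusion direction for the weak $\ell_p^w$ spaces with that for the strong $\ell_p$ spaces. The safe check is to verify directly from the definition of $\|\cdot\|_{w,p}$ that for $1\leq r\leq s$ one has $\|(x_i)\|_{w,s}\leq \|(x_i)\|_{w,r}$ (because the scalar sequences $(x^*(x_i))$ satisfy $\|\cdot\|_s\leq\|\cdot\|_r$), so enlarging the summing exponent of $u^*$ only weakens the right-hand side of \eqref{almostp} and thus almost $2$-summing automatically gives almost $r$-summing for every $r\leq 2$, which is precisely the regime $r=q^*$ with $q\geq 2$. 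Everything else is a one-line substitution into the theorem already proved.
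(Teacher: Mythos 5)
Your proof is correct and essentially matches the paper's: both reduce the corollary to Theorem~\ref{theorinclal.s-2} via the monotonicity of weak $\ell_p$-norms in the exponent, i.e.\ $\Vert\cdot\Vert_{w,2}\leq\Vert\cdot\Vert_{w,q^*}$ for $q^*\leq 2$. The only cosmetic difference is where that fact is applied: you upgrade the hypothesis ($u^*$ almost $2$-summing implies almost $q^*$-summing) and invoke the theorem with $p=q$, while the paper invokes the theorem with $p=2$ and then upgrades the conclusion using the inclusion $\ell_{q^*}^{w}(Y^*)\subseteq\ell_{2}^{w}(Y^*)$.
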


\begin{proof}
	Let $u \in \mathcal{L}(X,Y)$ such that $u^* \in \Pi_{al.s}(Y^*,X^*)$. Taking $p=2$ in previous theorem, we have that 
	$$\sum_{i=1}^{\infty}\vert y_{i}^{*}(u(x_i))\vert<\infty$$ for each $(x_{i})_{i=1}^{\infty}\in Rad(X)$ and each $(y^{*}_{i})_{i=1}^{\infty}\in \ell_{2}^{w}(Y^*)$.
	
	On the other hand, $\ell_{q^*}^{w}(Y^*)\subseteq\ell_{2}^{w}(Y^*)$ for $q^*\leq 2$. Thus,
	$$\sum_{i=1}^{\infty}\vert y_{i}^{*}(u(x_i))\vert<\infty$$ for each $(x_{i})_{i=1}^{\infty}\in Rad(X)$ and each $(y^{*}_{i})_{i=1}^{\infty}\in \ell_{q^*}^{w}(Y^*)$.
	
	Consequently, for each $(x_{i})_{i=1}^{\infty}\in Rad(X)$,  $(u(x_i))_{i=1}^{\infty}\in \ell_{q}\langle Y\rangle$.
	
\end{proof}
Note that from (\ref{inccc}) it is obvious that Corollary \ref{improviment} extends the statement of Bu-Kranz Theorem.

\begin{corollary} Let $X$ and $Y$ be Banach spaces such that $X$ has type $p$, with $1\leq p\leq 2$, and $u$ be a continuous linear operator from $X$ to $Y$. If $u^*$ is almost $p^*$-summing, then $u$ is Cohen strongly $p$-summing.
\end{corollary}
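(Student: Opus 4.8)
The plan is to reduce everything to \thmref{theorinclal.s-2} by showing that the type $p$ hypothesis forces the inclusion $\ell_{p}(X)\subseteq Rad(X)$. Recall that $u$ being Cohen strongly $p$-summing means precisely that $(u(x_i))_{i=1}^{\infty}\in\ell_{p}\langle Y\rangle$ for every $(x_i)_{i=1}^{\infty}\in\ell_{p}(X)$, whereas \thmref{theorinclal.s-2} already delivers $(u(x_i))_{i=1}^{\infty}\in\ell_{p}\langle Y\rangle$ for every $(x_i)_{i=1}^{\infty}\in Rad(X)$. Hence it suffices to prove that each $\ell_{p}$-summable sequence in $X$ is almost unconditionally summable.

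First I would fix $(x_i)_{i=1}^{\infty}\in\ell_{p}(X)$ and apply the defining inequality of type $p$ to the finite blocks $(x_i)_{i=n}^{m}$, obtaining
\[
\left\|(x_i)_{i=n}^{m}\right\|_{Rad(X)}\leq T_p(X)\left(\sum_{i=n}^{m}\|x_i\|^{p}\right)^{1/p}.
\]
Since $\sum_{i}\|x_i\|^{p}<\infty$, the right-hand side tends to $0$ as $n,m\to\infty$, so the partial sums $\sum_{i=1}^{m}r_i(\cdot)x_i$ form a Cauchy sequence in $L^{2}([0,1];X)$. By completeness of this space the series converges, which is exactly the statement that $(x_i)_{i=1}^{\infty}\in Rad(X)$; taking $n=1$ and letting $m\to\infty$ in the same estimate also gives $\|(x_i)_{i=1}^{\infty}\|_{Rad(X)}\leq T_p(X)\|(x_i)_{i=1}^{\infty}\|_{p}$, so the inclusion $\ell_{p}(X)\hookrightarrow Rad(X)$ is continuous.

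Combining the two facts finishes the argument: given $(x_i)_{i=1}^{\infty}\in\ell_{p}(X)$, the previous step places it in $Rad(X)$, and \thmref{theorinclal.s-2} (applicable because $u^{*}$ is almost $p^{*}$-summing) then yields $(u(x_i))_{i=1}^{\infty}\in\ell_{p}\langle Y\rangle$. This is precisely the definition of $u\in\mathcal{D}_{p}(X,Y)$.

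I do not expect a genuine obstacle here; the single substantive point is the passage from the finite-sequence type $p$ inequality to the honest inclusion $\ell_{p}(X)\subseteq Rad(X)$, which rests only on the completeness of $Rad(X)$. One caveat worth flagging is the parameter range: Cohen strong $p$-summability is defined only for $p>1$, so the statement is degenerate at the endpoint $p=1$ (where $p^{*}=\infty$ and ``almost $\infty$-summing'' is undefined), and the content of the corollary lives in the range $1<p\leq 2$.
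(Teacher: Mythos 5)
Your proof is correct and takes essentially the same route as the paper: both reduce the statement to Theorem~\ref{theorinclal.s-2} via the inclusion $\ell_{p}(X)\subseteq Rad(X)$ furnished by type $p$, the only difference being that the paper cites \cite[Proposition 12.4]{djt} for this inclusion while you prove it directly with a Cauchy-tail argument in $L^{2}([0,1];X)$. Your caveat about the endpoint $p=1$ (where $p^{*}=\infty$ and the notions degenerate) is a fair observation and does not affect the argument in the substantive range $1<p\leq 2$.
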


\begin{proof} Let $u$ be a continuous linear operator from $X$ to $Y$ such that $u^*$ is almost $p^*$-summing. Given $(x_{i})_{i=1}^{\infty}\in \ell_{p}(X)$ and since $X$ has type $p$, by \cite[Proposition 12.4]{djt}, we have that $(x_{i})_{i=1}^{\infty}\in Rad(X)$. By Theorem \ref{theorinclal.s-2}, $(u(x_i))_{i=1}^{\infty}\in \ell_{p}\langle Y\rangle$. Hence $u$ is Cohen strongly $p$-summing.
	
\end{proof}

One of the interesting problems of the theory of absolutely summing operators is to investigate when the adjoint of an operator is $p$-summing. A nice result of Cohen in \cite{Cohen} ensures that a linear operator $u:X\rightarrow Y$ is Cohen strongly $p$-summing if, and only if, $u^{*}:Y^{*}\rightarrow X^{*}$ is $p^{*}$-summing. The next result approaches this kind of problem and provides a type of reciprocal of Bu-Kranz Theorem when $X$ has type $p$, with $1\leq p\leq 2$.

\begin{theorem}\label{theoreminverse} Let $X$ and $Y$ be Banach spaces such that $X$ has type $p$, with $1\leq p\leq 2$, and $u$ be a continuous linear operator from $X$ to $Y$. If $u$ takes almost unconditionally summable sequences in X into members of $\ell_{p}\langle Y\rangle$, then $u^*$ is $p^*$-summing (or $u$ is Cohen strongly $p$-summing). 
\end{theorem}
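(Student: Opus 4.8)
The plan is to prove the contrapositive-friendly direction by showing directly that $u$ is Cohen strongly $p$-summing, since by Cohen's characterization this is equivalent to $u^*$ being $p^*$-summing. So I would aim to produce a constant $C\geq 0$ with
\[
\sum_{i=1}^m \vert y_i^*(u(x_i))\vert \leq C\cdot \Vert (x_i)_{i=1}^m\Vert_p \cdot \Vert (y_i^*)_{i=1}^m\Vert_{p^*,w}
\]
for all finite families. The key structural input is the hypothesis that $u$ maps $Rad(X)$ into $\ell_p\langle Y\rangle$, together with the type $p$ assumption on $X$, which by \cite[Proposition 12.4]{djt} gives the inclusion $\ell_p(X)\subseteq Rad(X)$ with $\Vert(x_i)\Vert_{Rad(X)}\leq T_p(X)\cdot\Vert(x_i)\Vert_p$.

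First I would make the set-theoretic map into a bounded operator. The hypothesis says that for every $(x_i)_{i=1}^\infty\in Rad(X)$ we have $(u(x_i))_{i=1}^\infty\in\ell_p\langle Y\rangle$; composing with the inclusion $\ell_p(X)\hookrightarrow Rad(X)$ afforded by type $p$, this yields a well-defined linear map
\[
\widehat{u}:\ell_p(X)\longrightarrow \ell_p\langle Y\rangle,\qquad (x_i)_{i=1}^\infty\longmapsto (u(x_i))_{i=1}^\infty.
\]
The central step is a closed graph argument: both $\ell_p(X)$ and $\ell_p\langle Y\rangle$ are Banach spaces, so to conclude $\widehat u$ is bounded it suffices to check that its graph is closed. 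If $(x^{(n)})_n\to (x_i)$ in $\ell_p(X)$ and $\widehat u(x^{(n)})\to (z_i)$ in $\ell_p\langle Y\rangle$, then coordinatewise convergence in $\ell_p(X)$ forces $x_i^{(n)}\to x_i$ in $X$, hence $u(x_i^{(n)})\to u(x_i)$ by continuity of $u$; comparing with coordinatewise convergence of $\widehat u(x^{(n)})\to(z_i)$ in $\ell_p\langle Y\rangle$ gives $z_i=u(x_i)$, so the graph is closed and $\widehat u$ is bounded, say with $\Vert\widehat u\Vert = K$.

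Once $\widehat u$ is bounded, I would unwind the norm $\Vert\cdot\Vert_{C,p}$ on $\ell_p\langle Y\rangle$. By definition $\Vert(u(x_i))\Vert_{C,p}=\sup_{(y_i^*)\in B_{\ell_{p^*}^w(Y^*)}}\Vert(y_i^*(u(x_i)))\Vert_1$, so boundedness of $\widehat u$ reads exactly as
\[
\sum_{i=1}^m \vert y_i^*(u(x_i))\vert \leq K\cdot \Vert (x_i)_{i=1}^m\Vert_p
\]
whenever $\Vert(y_i^*)\Vert_{p^*,w}\leq 1$, and by homogeneity this gives the Cohen strongly $p$-summing inequality with $C=K$; invoking Cohen's theorem then yields $u^* \in \Pi_{p^*}$. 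The step I expect to be the main obstacle is verifying the closed graph hypothesis cleanly, specifically justifying that convergence in the norms $\Vert\cdot\Vert_p$ and $\Vert\cdot\Vert_{C,p}$ implies coordinatewise convergence in $X$ and $Y$ respectively, so that the two candidate limits can be identified; this rests on the elementary estimates $\Vert x_j\Vert\leq\Vert(x_i)\Vert_p$ and $\vert y^*(y_j)\vert$-type bounds controlling each fixed coordinate by the ambient norm, which I would state as a short lemma rather than grind through in the main proof.
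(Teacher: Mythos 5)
Your proof is correct, and while it shares the paper's opening reduction --- type $p$ plus \cite[Proposition 12.4]{djt} gives $\ell_p(X)\subseteq Rad(X)$, so the hypothesis applies to every $(x_i)_{i=1}^\infty\in\ell_p(X)$ --- its concluding mechanism is genuinely different. The paper stays at the level of individual sequences: it fixes $(y_i^*)_{i=1}^\infty\in\ell_{p^*}^w(Y^*)$ and $(x_i)_{i=1}^\infty\in\ell_p(X)$, notes that $(u(x_i))_{i=1}^\infty\in\ell_p\langle Y\rangle$ forces $\sum_{i=1}^\infty\vert u^*(y_i^*)(x_i)\vert=\sum_{i=1}^\infty\vert y_i^*(u(x_i))\vert<\infty$, and then concludes ``by arbitrariness of $(x_i)_{i=1}^\infty\in\ell_p(X)$'' that $(u^*(y_i^*))_{i=1}^\infty\in\ell_{p^*}(X^*)$; that is, it verifies the definition of $u^*\in\Pi_{p^*}$ directly, leaning on the duality fact that a sequence in $X^*$ which pairs absolutely with every member of $\ell_p(X)$ must lie in $\ell_{p^*}(X^*)$. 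You instead prove the (equivalent) Cohen strongly $p$-summing statement quantitatively: the closed graph theorem applied to the induced map $\widehat{u}:\ell_p(X)\to\ell_p\langle Y\rangle$ yields a constant $K$, unwinding $\Vert\cdot\Vert_{C,p}$ yields the Cohen inequality, and Cohen's characterization from \cite{Cohen} (already quoted in the paper's introduction) converts this into $u^*\in\Pi_{p^*}$. The trade-off: the paper's argument is three lines, but its ``arbitrariness'' step silently appeals to a nontrivial sequence-space duality whose standard proof is itself a closed-graph or uniform-boundedness argument; your version makes that hidden uniformity explicit, is self-contained modulo the completeness of $\ell_p\langle Y\rangle$ and Cohen's theorem, and additionally produces the strongly $p$-summing inequality with a concrete constant. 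Your deferred coordinatewise-convergence lemma is fine as sketched: in $\ell_p\langle Y\rangle$, testing against the weak-$\ell_{p^*}$ unit sequence which equals $y^*\in B_{Y^*}$ in position $j$ and zero elsewhere gives $\Vert w_j\Vert\le\Vert(w_i)_{i}\Vert_{C,p}$, while in $\ell_p(X)$ clearly $\Vert x_j\Vert\le\Vert(x_i)_{i}\Vert_p$.
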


\begin{proof} We just need to show that if $u$ takes elements of $Rad(X)$ into elements of $\ell_{p}\langle Y\rangle$, then $u^*$ takes elements of $\ell_{p^*}^{w}(Y^*)$ into elements of $\ell_{p^*}(X^*)$.
	
	Let  $(x_{i})_{i=1}^{\infty}\in \ell_{p}(X)$ and $(y^{*}_{i})_{i=1}^{\infty}\in \ell_{p^*}^{w}(Y^*)$. As $X$ has type $p$, by \cite[Proposition 12.4]{djt} we have that $(x_{i})_{i=1}^{\infty}\in Rad(X)$ and by hypothesis $(u(x_i))_{i=1}^{\infty}\in \ell_{p}\langle Y\rangle$. Thus
	$$\sum_{i=1}^{\infty}\vert y_{i}^{*}(u(x_i))\vert<\infty.$$
	So
	$$\sum_{i=1}^{\infty}\vert u^*(y_{i}^{*})(x_i)\vert<\infty.$$
	By arbitrariness of $(x_{i})_{i=1}^{\infty}\in \ell_{p}(X)$, $(u^*(y^*_i))_{i=1}^{\infty}\in \ell_{p^*}(X^*)$.

\end{proof}

\begin{corollary}\cite[Theorem 4]{BK}\label{corollarybu} Let $X$ and $Y$ be Banach spaces such that $X$ has type 2 and $u\in\mathcal{L}(X,Y)$. Then $u$ takes almost unconditionally summable sequences in X into members of $\ell_{2}\langle Y\rangle$ if, and only if, $u^*$ is 2-summing (or $u$ is Cohen strongly 2-summing).
\end{corollary}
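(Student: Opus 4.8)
The plan is to recognize that Corollary~\ref{corollarybu} is simply the special case $p=2$ of the preceding two results, since when $p=2$ we have $p^*=2$ and type $2$ is exactly the type hypothesis needed. The forward implication is an immediate application of Corollary~\ref{improviment} (or directly of \thmref{theorinclal.s-2}): if $u^*$ is $2$-summing then, by the inclusion~(\ref{inccc}), $u^*$ is almost summing, so $u$ carries elements of $Rad(X)$ into $\ell_2\langle Y\rangle$; and by \cite[Proposition 12.4]{djt}, type $2$ guarantees $\ell_2(X)\subseteq Rad(X)$, which is the standing hypothesis under which $Rad(X)$-input and almost-unconditional-summability input coincide. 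The reverse implication is precisely \thmref{theoreminverse} with $p=2$: type $2$ again gives $\ell_2(X)\subseteq Rad(X)$, so the hypothesis that $u$ sends almost unconditionally summable sequences into $\ell_2\langle Y\rangle$ yields that $u^*$ maps $\ell_2^w(Y^*)$ into $\ell_2(X^*)$, i.e. $u^*$ is $2$-summing.

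First I would invoke Cohen's duality result, already quoted in the excerpt, that $u$ is Cohen strongly $p$-summing if and only if $u^*$ is $p^*$-summing; with $p=2$ this identifies ``$u$ is Cohen strongly $2$-summing'' with ``$u^*$ is $2$-summing'', so the two parenthetical formulations in the statement are genuinely equivalent and I need only prove one of them. Then I would assemble the biconditional: the ``only if'' direction is \thmref{theoreminverse} at $p=2$, and the ``if'' direction is Corollary~\ref{improviment} at $q=2$ combined with~(\ref{inccc}) to pass from $2$-summing to almost summing. Because both ingredients are stated earlier, the proof is essentially a one-line citation of each direction followed by the observation that the hypotheses specialize correctly.

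The only point requiring the slightest care is the matching of the sequence classes feeding the operator. In \thmref{theorinclal.s-2} and \thmref{theoreminverse} the operator is applied to members of $Rad(X)$, whereas Corollary~\ref{corollarybu} speaks of \emph{almost unconditionally summable} sequences; I must note that these are the same space, since $Rad(X)$ is by the definition given in the introduction precisely the space of almost unconditionally summable $X$-valued sequences. No further obstacle arises: type $2$ supplies the inclusion $\ell_2(X)\subseteq Rad(X)$ that both cited theorems require, and the parameter $p=2$ makes all conjugate indices equal to $2$ so that $\ell_{p^*}^w=\ell_2^w$ and $\ell_p\langle Y\rangle=\ell_2\langle Y\rangle$ throughout. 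Thus the corollary follows formally with no new estimate to be carried out.
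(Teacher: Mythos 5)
Your proof is correct, and its skeleton matches the paper's: both directions are obtained by specializing \thmref{theoreminverse} to $p=2$ (for ``only if'') and \thmref{theorinclal.s-2} (for ``if''), after identifying $Rad(X)$ with the space of almost unconditionally summable sequences. The one genuine difference is how you pass from ``$u^*$ is $2$-summing'' to ``$u^*$ is almost summing'': you use the elementary inclusion~(\ref{inccc}), which holds for arbitrary Banach spaces, whereas the paper invokes type $2$ of $X$ to get cotype $2$ of $X^*$ via \cite[Proposition 11.10]{djt} and then the coincidence $\Pi_{al.s}(Y^*,X^*)=\Pi_{2}(Y^*,X^*)$ from \cite[Corollary 12.7]{djt}. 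Your route is slightly more economical and makes visible that the ``if'' direction requires no type hypothesis at all (the type $2$ assumption is needed only for the reverse implication, inside \thmref{theoreminverse}); the paper's route, at the cost of two extra citations, records the stronger fact that under type $2$ the three conditions ``$u^*$ $2$-summing'', ``$u^*$ almost summing'' and ``$u$ maps $Rad(X)$ into $\ell_{2}\langle Y\rangle$'' are all equivalent. One small slip on your side: in your first paragraph you tie the identification of $Rad(X)$ with the almost unconditionally summable sequences to the type $2$ hypothesis via \cite[Proposition 12.4]{djt}; that identification is definitional (as you yourself correct later), and Proposition 12.4 plays no role in the ``if'' direction.
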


\begin{proof} Since $X$ has type 2 it follows from \cite[Proposition 11.10]{djt} that $X^*$ has cotype 2 and by \cite[Corollary 12.7]{djt} we have that $\Pi_{al.s}\left(  Y^*,X^*\right)=\Pi_{2}\left(  Y^*,X^*\right)$. Then the result follows of the Theorem \ref{theorinclal.s-2} and Theorem \ref{theoreminverse}. 	
	
\end{proof}

Theorem \ref{theoreminverse} tell us that the hypothesis
$$(*)\ u\in \mathcal{L}(X,Y)\ \mbox{takes almost unconditionally summable sequences in}\  X\ \mbox{into members of}\ \ell_{p}\langle Y\rangle,$$
is a sufficient condition for $u^*$ to be $p^*$-summing, whenever $X$ has type $p$. On the other hand, it is also known that there are operators with a $p$-summing adjoint which are not themselves $p$-summing. So the hypothesis $(*)$ is not a sufficient condition for $u$ to be $p$-summing for some $p\geq1$. However, for some Banach spaces $X$ and $Y$ we will show that the hypothesis $(*)$ is also a sufficient condition for $u$ to be 1-summing. For this we
shall recall the notion of $\mathcal{L}_{p}$-spaces. If $\lambda>1$ and $1\leq
p\leq\infty$, a Banach space $Y$ is an $\mathcal{L}_{p}$%
\textit{-space} if every finite dimensional subspace $E$ of $Y$ is contained
in a finite dimensional subspace $F$ of $Y$ for which there is an
isomorphism $v:F\longrightarrow\ell_{p}^{\dim F}$ such that $\Vert
v\Vert\Vert v^{-1}\Vert\leq\lambda$. Also for this purpose we use the following theorem recently proved in \cite{MPS}.

\begin{theorem}\cite[Theorem 1.6]{MPS} \label{israel}
	\label{nn11}Let $X$ be a Banach space, $Y$ be an $\mathcal{L}_{p}$-space with
	$2\leq p<\infty$ and $u\in\mathcal{L}(X,Y)$. If $u^*$ is almost $p$-summing, then $u$ is 1-summing.	
\end{theorem}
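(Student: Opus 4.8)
The plan is to reduce \thmref{israel} to the special case $X = \ell_{p^*}$ (equivalently, to the setting of the already-quoted \cite[Theorem 1.6]{MPS}) by exploiting the structure of $\mathcal{L}_p$-spaces together with a finite-rank, local argument. Since $Y$ is an $\mathcal{L}_p$-space with $2 \le p < \infty$, its dual behaves like an $\mathcal{L}_{p^*}$-space with $1 < p^* \le 2$; in particular $Y^*$ has cotype $2$. Combined with the hypothesis that $u^*$ is almost $p$-summing, the cotype-$2$ property should let me upgrade almost $p$-summability of $u^*$ to genuine $p$-summability of $u^*$ via \cite[Corollary 12.7]{djt} (the same mechanism already used in the proof of \corref{corollarybu}, where type $2$ on $X$ forces cotype $2$ on $X^*$). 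So the first step is to argue that $u^*$ being almost $p$-summing with $Y^*$ of cotype $2$ yields $u^* \in \Pi_p(Y^*, X^*)$, or at least $\Pi_{al.s} = \Pi_p$ in the relevant range.

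Having secured that $u^*$ is $p$-summing, I would then invoke Cohen's duality result quoted in the text: $u^*$ is $p$-summing if and only if $u$ is Cohen strongly $p^*$-summing (reading off the conjugate indices carefully). This translates the adjoint condition into a statement about $u$ itself living in the ideal $\mathcal{D}_{p^*}$. The goal $u \in \Pi_1(X,Y)$ is then a statement about how the $\mathcal{L}_p$-structure of the codomain collapses Cohen strong summability into ordinary $1$-summability. Here the $\mathcal{L}_p$ hypothesis enters essentially: locally $Y$ looks like $\ell_p^n$, and on the finite-dimensional pieces $\ell_p^n$ one can use that $p$-summing and Cohen strong summing are well understood, so that the defining inequality for $1$-summing can be verified on each finite-dimensional subspace $F \subset Y$ with a constant controlled by the isomorphism constant $\lambda$ uniformly in $\dim F$.

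The main obstacle will be the passage from the local (finite-dimensional $\ell_p^n$) estimate to a global $1$-summing bound for $u$. The $\mathcal{L}_p$-space definition only guarantees that each finite-dimensional subspace $E$ embeds into some $F$ with $\|v\|\,\|v^{-1}\| \le \lambda$, so I would need to run the standard finite-dimensional / ultraproduct-type localization: fix finitely many $x_1,\dots,x_m \in X$, capture the span of $u(x_1),\dots,u(x_m)$ inside such an $F$, transport the problem through $v$ to $\ell_p^{\dim F}$, apply the $\ell_{p^*}$-domain version of the theorem there, and then check that the resulting $1$-summing constant does not blow up as $m \to \infty$. The delicate points are (i) confirming the correct conjugate indices throughout, since the hypothesis is almost $p$-summing on $u^*$ while the conclusion is $1$-summing on $u$, and (ii) verifying that the constant coming from the local isomorphisms is genuinely $\lambda$-uniform, so that the supremum defining $\pi_1(u)$ stays finite. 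I would expect that once these index bookkeeping and uniformity issues are handled, the inequality $\sum_i \|u(x_i)\| \le C \,\|(x_i)\|_{w,1}$ follows directly from the finite-dimensional estimate.
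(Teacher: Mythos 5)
The paper itself gives no proof of Theorem~\ref{israel}: it is quoted verbatim from \cite[Theorem 1.6]{MPS}, so your attempt has to be judged on its own merits, and it contains a fatal gap. The problem is your very first reduction, which discards exactly the strength of the hypothesis that the conclusion needs. You are right that for $p\geq 2$ one has $\Vert\cdot\Vert_{w,p}\leq\Vert\cdot\Vert_{w,2}$, so an almost $p$-summing $u^*$ is almost summing; right that $Y^*$ is an $\mathcal{L}_{p^*}$-space with cotype $2$; right that \cite[Corollary 12.7]{djt} then makes $u^*$ $2$-summing (hence $p$-summing); and right that Cohen's duality then gives $u\in\mathcal{D}_{p^*}(X,Y)$ (indeed $u\in\mathcal{D}_{2}(X,Y)$). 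But for $p>2$ these implications are strictly one-way, and the statement your final localization step would have to prove --- that $u^*\in\Pi_2(Y^*,X^*)$ together with the $\mathcal{L}_p$-structure of $Y$ forces $u\in\Pi_1(X,Y)$ --- is simply false. Fix $p>2$, choose $\alpha$ with $1/p<\alpha<1/2$, let $\sigma_n=n^{-\alpha}$ and let $u=D_\sigma:\ell_2\rightarrow\ell_p$ be the corresponding diagonal operator. Since $\sigma\in\ell_p$, H\"{o}lder's inequality shows that $u^*=D_\sigma:\ell_{p^*}\rightarrow\ell_2$ factors as $\ell_{p^*}\rightarrow\ell_1\hookrightarrow\ell_2$, and the inclusion $\ell_1\hookrightarrow\ell_2$ is $1$-summing by Grothendieck's theorem; so $u^*$ is even $1$-summing, in particular $2$-summing, almost summing, and $u\in\mathcal{D}_2(\ell_2,\ell_p)$. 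Yet $u$ is not $1$-summing: in $\ell_2$ one has $\Vert(e_n)_{n=1}^N\Vert_{w,1}=N^{1/2}$, while $\sum_{n=1}^N\Vert u(e_n)\Vert_p=\sum_{n=1}^N n^{-\alpha}\geq c\,N^{1-\alpha}$ with $1-\alpha>1/2$. (Consistently with \cite{MPS}, this $u^*$ is \emph{not} almost $p$-summing: $\Vert(e_n)_{n=1}^N\Vert_{w,p}=1$ in $\ell_{p^*}$, yet $\Vert(u^*(e_n))_{n=1}^N\Vert_{Rad(\ell_2)}=(\sum_{n=1}^N\sigma_n^2)^{1/2}\rightarrow\infty$ because $2\alpha<1$.) So no amount of care with conjugate indices or $\lambda$-uniformity can rescue the plan: the information lost in the first step is irrecoverable.

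A workable proof must feed the almost $p$-summing inequality (\ref{almostp}) for $u^*$ with sequences of functionals whose \emph{weak} $\ell_p$-norm is uniformly controlled, and this is precisely what the $\mathcal{L}_p$-structure supplies: the coordinate functionals of a local copy $v:F\rightarrow\ell_p^n$ form a weakly $p$-summable sequence of norm one in $\ell_{p^*}^n=(\ell_p^n)^*$ --- the same observation that drives the proof of Theorem~\ref{teorbugeral}. Concretely, for $u(x_i)\in F$ one estimates $\Vert vu(x_i)\Vert_p\leq\Vert vu(x_i)\Vert_2$ (here $p\geq2$ enters), uses Khinchin's inequality to convert the $\ell_2^n$-norm into a Rademacher average of the functionals $u^*(\psi_k)$, where $\psi_k\in Y^*$ extends $e_k^*\circ v$, uses $\sum_i\vert x^*(x_i)\vert\leq\Vert x^*\Vert\cdot\Vert(x_i)_{i=1}^m\Vert_{w,1}$ to handle the sum in $i$, and only then applies (\ref{almostp}) to $(\psi_k)_{k=1}^n$; for this one needs $\Vert(\psi_k)_{k=1}^n\Vert_{w,p}\leq C(\lambda)$, which calls for uniformly complemented local subspaces (available in $\mathcal{L}_p$-spaces, $1<p<\infty$), since Hahn--Banach extensions alone do not preserve weak $\ell_p$-bounds. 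Your sketch never engages the hypothesis at this level, and after your reduction to $\mathcal{D}_2$ it no longer can.
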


\begin{theorem}\label{teorema1} Let $X$ and $Y$ be Banach spaces such that $X$ has type 2 and $Y$ is an $\mathcal{L}_{2}$-space and $u$ be a continuous linear operator from $X$ to $Y$. If $u$ takes almost unconditionally summable sequences in X into members of $\ell_{2}\langle Y\rangle$, then $u$ is 1-summing.
\end{theorem}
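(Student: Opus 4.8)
The plan is to assemble the conclusion by chaining results already established, passing through the adjoint $u^*$. The point is that all three hypotheses---type $2$ on $X$, the summability hypothesis that $u$ carries almost unconditionally summable sequences into $\ell_2\langle Y\rangle$, and $Y$ being an $\mathcal{L}_2$-space---feed naturally into Theorem \ref{theoreminverse} and Theorem \ref{israel} with the single common parameter $p=2$.

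First I would apply Theorem \ref{theoreminverse} with $p=2$, so that $p^*=2$ as well. Since $X$ has type $2$ and, by hypothesis, $u$ carries almost unconditionally summable sequences into members of $\ell_2\langle Y\rangle$, that theorem yields that $u^*$ is $2$-summing; equivalently, this is precisely the forward implication of Corollary \ref{corollarybu}. Next I would upgrade this to almost summing: by the inclusion (\ref{inccc}), every absolutely $p$-summing operator is almost summing, and applying this to $u^*\in\Pi_2(Y^*,X^*)$ gives $u^*\in\Pi_{al.s}(Y^*,X^*)$, i.e. $u^*$ is almost $2$-summing.

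Finally I would invoke Theorem \ref{israel} with $p=2$. The value $p=2$ lies in the admissible range $2\le p<\infty$, and by assumption $Y$ is an $\mathcal{L}_2$-space; since $u^*$ is almost $2$-summing, the theorem concludes that $u$ is $1$-summing, which is exactly the desired statement.

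The argument is essentially a concatenation of known implications, so I expect no deep obstacle. The only point requiring care is the matching of parameters: one needs a single exponent that simultaneously satisfies the type hypothesis of Theorem \ref{theoreminverse} (where $1\le p\le 2$), serves as the Cohen--strong exponent in the summability assumption, and lands in the $\mathcal{L}_p$-range $2\le p<\infty$ of Theorem \ref{israel}. The value $p=2$ is precisely the common endpoint of these ranges, which is why the hypotheses are stated with type $2$ and an $\mathcal{L}_2$-space rather than more general exponents.
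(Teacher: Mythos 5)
Your proof is correct and follows essentially the same route as the paper: the paper likewise obtains that $u^*$ is $2$-summing from the type-$2$ hypothesis via Corollary \ref{corollarybu} (the forward implication you derive from Theorem \ref{theoreminverse}), upgrades it to almost summing via (\ref{inccc}), and concludes with Theorem \ref{israel} for the $\mathcal{L}_2$-space $Y$. Your remark about the matching of the exponent $p=2$ across the three hypotheses is a fair observation, but otherwise there is nothing to add.
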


\begin{proof} Suppose that $u\in\mathcal{L}(X,Y)$ takes almost unconditionally summable sequences in X into members of $\ell_{2}\langle Y\rangle$. As $X$ has type 2, Corollary \ref{corollarybu} ensures that $u^*$ is 2-summing. By (\ref{inccc}), $u^*$ is almost summing. Since $Y$ is an $\mathcal{L}_{2}$-space, it follows from Theorem \ref{israel} that $u$ is 1-summing.
	
\end{proof}

\begin{remark}
	From \cite[Theorem 3.2.3(ii)]{Cohen} we know that if $Y$ is an $\mathcal{L}_{2}$-space, then $\mathcal{D}_{2}(X,Y)\subseteq\Pi_{2}(X,Y)$ for all Banach space $X$. Corollary \ref{corollarybu} and Theorem \ref{teorema1} improve this inclusion, they ensure that $\mathcal{D}_{2}(X,Y)\subseteq\Pi_{1}(X,Y)$ whenever $X$ has type 2 and $Y$ is an $\mathcal{L}_{2}$-space. 
\end{remark}

\section{Extending Bu's Theorem}

The next result is a variant of Bu's Theorem to $\mathcal{L}_{p^{*}}$-spaces, with $2\leq p<\infty$, and almost $p$-summing operators instead of Hilbert spaces and $p$-summing operators.

\begin{theorem}\label{teorbugeral} Let $2\leq p<\infty$, $1< q\leq\infty$, $X$ and $Y$ be Banach spaces such that $X$ is an $\mathcal{L}_{p^{*}}$-space. Then \[\Pi_{al.s.p}\left(  X,Y\right)\subseteq\mathcal{D}_{q}\left(  X,Y\right).\]
\end{theorem}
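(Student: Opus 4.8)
The plan is to combine the Cohen duality for strongly summing operators with the structural properties of $\mathcal{L}_{p^*}$-spaces, mirroring Bu's original argument but replacing the Hilbertian input by the almost summing hypothesis. First I would translate the target inclusion into a statement about adjoints. By Cohen's duality (recalled in the paragraph preceding Theorem~\ref{theoreminverse}), an operator $u \in \mathcal{L}(X,Y)$ is Cohen strongly $q$-summing if and only if $u^* : Y^* \to X^*$ is $q^*$-summing. So the goal becomes: given $u \in \Pi_{al.s.p}(X,Y)$ with $X$ an $\mathcal{L}_{p^*}$-space, show that $u^*$ is $q^*$-summing for every $q$ with $1 < q \leq \infty$, i.e. for every $q^*$ with $1 \leq q^* < \infty$.

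Next I would exploit the $\mathcal{L}_{p^*}$-structure of $X$ to control $X^*$. A standard duality fact is that if $X$ is an $\mathcal{L}_{p^*}$-space then $X^*$ is an $\mathcal{L}_p$-space, and since $2 \leq p < \infty$, the space $X^*$ has cotype $2$ (because $\mathcal{L}_p$-spaces with $p \geq 2$ have cotype $2$). I would then pass from the almost $p$-summing hypothesis on $u$ to a genuine summing property. The key leverage is that $u^*$ lands in the cotype $2$ space $X^*$: by the coincidence results for almost summing operators into cotype $2$ spaces (the same circle of ideas behind \cite[Corollary 12.7]{djt} used in Corollary~\ref{corollarybu}), almost summing operators with cotype $2$ range coincide with $2$-summing operators. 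Combined with the ideal and inclusion properties relating almost $p$-summing to almost summing operators, this should upgrade the hypothesis on $u$ to an honest absolutely summing statement.

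From there the final step is to show that the relevant summing property of $u$ (or equivalently of $u^*$) forces $u^*$ to be $q^*$-summing for the full range of $q$. Here I would invoke the inclusion theorems for absolutely summing operators together with the $\mathcal{L}_p$-structure: on $\mathcal{L}_p$-spaces the classes $\Pi_r$ enjoy strong coincidence and inclusion properties, so that once $u^*$ is $r$-summing for one finite $r$ it is in fact $s$-summing for all $s \geq r$, and the cotype $2$ of $X^*$ lets one push the exponent down to cover every $q^* \in [1, \infty)$. Translating back through Cohen duality then yields $u \in \mathcal{D}_q(X,Y)$ for every $1 < q \leq \infty$, which is exactly the claimed inclusion $\Pi_{al.s.p}(X,Y) \subseteq \mathcal{D}_q(X,Y)$.

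The main obstacle I anticipate is the step that converts the almost $p$-summing hypothesis into a usable absolutely summing property with the correct exponent: one must verify that the $\mathcal{L}_{p^*}$ (equivalently cotype $2$) geometry of the range of $u^*$ really does collapse the almost summing norm to a $2$-summing norm in a way compatible with the index $p$, and then that the inclusion relations among the $\Pi_r$ classes on $\mathcal{L}_p$-spaces are strong enough to reach every $q^*$. Getting the exponents to line up across the Cohen duality, the almost-to-absolute upgrade, and the inclusion theorems is where the argument is most delicate, and it is the part I would check most carefully.
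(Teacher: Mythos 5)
Your reduction via Cohen duality (prove that $u^*$ is $q^*$-summing) is a legitimate reformulation, but the central step of your plan has a genuine gap: the hypothesis is that $u$ is almost $p$-summing, and no duality transfers this to $u^*$. The coincidence theorem you invoke (\cite[Corollary 12.7]{djt}: almost summing operators into a cotype $2$ space are $2$-summing) applies to an operator that is already known to be almost summing \emph{and} whose range has cotype $2$. Here each of the two operators at hand fails one of these requirements: $u$ maps into the arbitrary Banach space $Y$, which carries no cotype restriction, so the cotype $2$ of $X^*$ is irrelevant to $u$; and $u^*$ does map into the cotype $2$ space $X^*$, but nothing in the hypothesis tells you that $u^*$ is almost summing --- unlike the absolutely summing and Cohen classes, the class $\Pi_{al.s.p}$ has no adjoint duality theorem, and obtaining a summing property of $u^*$ from an almost summing property of $u$ is precisely the content of the theorem (via Cohen's duality), not something available at the outset. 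So the ``almost-to-absolute upgrade'' that you flag as the main obstacle is not a delicate verification; it is where the argument breaks down.

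There is a second, independent problem at the end of your plan: even granting that $u^*$ were $2$-summing, the inclusion theorems move exponents upward ($\Pi_r \subseteq \Pi_s$ for $r \leq s$), and cotype $2$ of the range only collapses the exponents in $[2,\infty)$ down to $2$. To obtain $u \in \mathcal{D}_q(X,Y)$ for $q > 2$ (in particular $q = \infty$) you need $u^*$ to be $q^*$-summing with $q^* < 2$, ultimately $q^* = 1$, and with the arbitrary domain $Y^*$ no Grothendieck-type theorem is available to push the exponent below $2$. The paper avoids both problems by a direct local argument: fix $x_1,\dots,x_m \in X$ and $y_1^*,\dots,y_m^* \in Y^*$, use the $\mathcal{L}_{p^*}$-structure to find an isomorphism $v$ from a finite dimensional subspace containing the $x_i$ onto $\ell_{p^*}^n$ with $\Vert v\Vert \Vert v^{-1}\Vert \leq \lambda$, expand each $vx_i$ in the canonical basis $(e_k)_{k=1}^n$, and estimate $\sum_{i=1}^m \vert y_i^*(u(x_i))\vert$ using H\"older's, Khinchin's and Kahane's inequalities; the almost $p$-summing hypothesis enters only through $uv^{-1}$ applied to the sequence $(e_k)_{k=1}^n$, which satisfies $\Vert (e_k)_{k=1}^n\Vert_{p,w} = 1$ in $\ell_{p^*}^n$. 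This yields the Cohen strongly $q$-summing inequality directly, with constant $\lambda \cdot A_{q^*}^{-1} \cdot K_{2,q^*} \cdot \pi_{al.s.p}(u)$, and no statement about $u^*$ is ever needed.
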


\begin{proof} Let $u \in \Pi_{al.s.p}(X,Y)$. Fix $x_{1},...,x_{m} \in X$ and $y_{1}^*,...,y_{m}^* \in Y^*$. Since $X$ is an $\mathcal{L}_{p^{*}}$-space, there is a finite dimensional subspace $F$ of $X$ containing $span\{x_{1},..., x_{m}\}$ and an isomorphism $v: F \rightarrow \ell^{n}_{p^{*}}$ such that $\|v\|\|v^{-1}\| \leq \lambda$.
		
	 Note that
	$$vx_{i}=\sum_{k=1}^{n} \nu_{i,k}e_{k},\ \ \ i=1,...,m,$$
where $\left(  e_{k}\right)  _{k=1}^{n}$ is the canonical basis of $\ell_{p^{*}}^{n}=\left(\ell_{p}^{n}\right)^{*}$. Then, using the monotonicity
of the $\ell_{p}$ norms, the H\"{o}lder inequality and the Khinchin inequality it follows that

	\begin{align*}
		\sum_{i=1}^{m}\vert y_{i}^{*}(u(x_i))\vert & = \sum_{i=1}^{m}\vert y_{i}^{*}(uv^{-1}v(x_i))\vert\\
		& = \sum_{i=1}^{m}\left\vert y_{i}^{*}\left(uv^{-1}\left(\sum_{k=1}^{n} \nu_{i,k}e_{k}\right)\right)\right\vert  \\
		& \leq \sum_{i=1}^{m} \left(\sum_{k=1}^{n} \vert \nu_{i,k}\vert^{p^{*}}\right)^{1/p^{*}} \cdot \left(\sum_{k=1}^{n} \vert y_{i}^{*}(uv^{-1}(e_k))\vert^p\right)^{1/p} \\
	    & \leq \sum_{i=1}^{m}  \Vert vx_{i}\Vert \cdot \left(\sum_{k=1}^{n} \vert y_{i}^{*}(uv^{-1}(e_k))\vert^2\right)^{1/2}
	    \\
	    & \leq \sum_{i=1}^{m}  \Vert v\Vert\Vert x_{i}\Vert \cdot\dfrac{1}{A_{q^*}} \left( {\displaystyle\int\nolimits_{0}^{1}%
		}\left\vert\sum_{k=1}^{n}r_k(t)y_{i}^{*}(uv^{-1}(e_k))\right\vert^{q^*} dt\right)^{1/q^*}
		\\
	    & \leq \Vert v\Vert\cdot \dfrac{1}{A_{q^*}}\cdot  \left(\sum_{i=1}^{m}  \Vert x_{i}\Vert^{q}\right)^{1/q} \cdot\left(\sum_{i=1}^{m} {\displaystyle\int\nolimits_{0}^{1}
		}\left\vert\sum_{k=1}^{n}r_k(t)y_{i}^{*}(uv^{-1}(e_k))\right\vert^{q^*} dt\right)^{1/q^*}
		\\
	& = \Vert v\Vert\cdot \dfrac{1}{A_{q^*}} \cdot \|(x_{i})_{i=1}^{m}\|_{q} \cdot\left( {\displaystyle\int\nolimits_{0}^{1}\sum_{i=1}^{m}
		}\left\vert y_{i}^{*}\left(\sum_{k=1}^{n}r_k(t)uv^{-1}(e_k)\right)\right\vert^{q^*} dt\right)^{1/q^*}.
\end{align*}
But $$\displaystyle\left\Vert
(y_{i}^{*})_{i=1}^{m}\right\Vert _{q^{*},w}:=\sup_{y^{**} \in B_{Y^{**}}}   \left(\sum_{i=1}^{m} \vert y^{**}(y_{i}^{*})\vert^{q^{*}}\right)^{1/q^{*}}=\sup_{y \in B_{Y}}   \left(\sum_{i=1}^{m} \vert y_{i}^{*}(y)\vert^{q^{*}}\right)^{1/q^{*}}.$$
Then	
\begin{align*}
	\sum_{i=1}^{m}\vert y_{i}^{*}(u(x_i))\vert & \leq \Vert v\Vert\cdot \dfrac{1}{A_{q^*}} \cdot \|(x_{i})_{i=1}^{m}\|_{q} \cdot\left( {\displaystyle\int\nolimits_{0}^{1}
		}\left(\left\Vert \sum_{k=1}^{n}r_k(t)uv^{-1}(e_k)\right\Vert\cdot  \|(y^{*}_{i})_{i=1}^{m}\|_{q^*,w}  \right)^{q^*} dt\right)^{1/q^*}\\
	& \leq \Vert v\Vert\cdot \dfrac{1}{A_{q^*}} \cdot \|(x_{i})_{i=1}^{m}\|_{q} \cdot  \|(y^{*}_{i})_{i=1}^{m}\|_{q^*,w} \cdot\left( {\displaystyle\int\nolimits_{0}^{1}
	}\left\Vert \sum_{k=1}^{n}r_k(t)uv^{-1}(e_k)\right\Vert^{q^*} dt\right)^{1/q^*}.
\end{align*}	
By Kahane's inequality we have
	\begin{align*}
	\sum_{i=1}^{m}\vert y_{i}^{*}(u(x_i))\vert 
		& \leq \Vert v\Vert\cdot \dfrac{1}{A_{q^*}} \cdot \|(x_{i})_{i=1}^{m}\|_{q} \cdot  \|(y^{*}_{i})_{i=1}^{m}\|_{q^*,w} \cdot K_{2,q^{*}}\cdot\left( {\displaystyle\int\nolimits_{0}^{1}
		}\left\Vert \sum_{k=1}^{n}r_k(t)uv^{-1}(e_k)\right\Vert^{2} dt\right)^{1/2}.
	\end{align*}
Since $u$ is almost $p$-summing, it follows that $uv^{-1}$ also is almost $p$-summing and as $\Vert
(e_{k})_{k=1}^{n}\Vert_{p,w}=1$ in $\ell_{p^{\ast}}^{n},$ we have	
	\begin{align*}
		\sum_{i=1}^{m}\vert y_{i}^{*}(u(x_i))\vert & \leq \Vert v\Vert\cdot \dfrac{1}{A_{q^*}} \cdot \|(x_{i})_{i=1}^{m}\|_{q} \cdot  \|(y^{*}_{i})_{i=1}^{m}\|_{q^*,w} \cdot K_{2,q^{*}}\cdot\pi_{al.s.p}(u)\cdot \Vert v^{-1}\Vert\cdot  \|(e_{k})_{k=1}^{n}\|_{p,w}\\
		& \leq \lambda\cdot \dfrac{1}{A_{q^*}} \cdot K_{2,q^{*}}\cdot\pi_{al.s.p}(u) \cdot \|(x_{i})_{i=1}^{m}\|_{q} \cdot  \|(y^{*}_{i})_{i=1}^{m}\|_{q^*,w} .
	\end{align*}
	Therefore, $u\in\mathcal{D}_{q}\left(  X,Y\right)$.
	
\end{proof}

As a corollary, since (\ref{inccc}) is valid and every Hilbert space is an $\mathcal{L}_{2}$-space, we have an improvement of the Bu's Theorem \cite[Main Theorem]{Bu}:

\begin{corollary}[Bu's Theorem improved] Consider $1<q\leq \infty$. Let $X$ be an $\mathcal{L}_{2}$-space, $Y$ be a Banach space. Then
	\begin{equation*}
		\Pi_{al.s}(X,Y)\subseteq\mathcal{D}_{q}(X,Y).%
	\end{equation*}
\end{corollary}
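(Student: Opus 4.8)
The plan is to obtain this corollary as the special case $p=2$ of Theorem~\ref{teorbugeral}, so no new argument is required. First I would observe that when $p=2$ the conjugate index satisfies $p^{*}=2$, so the hypothesis ``$X$ is an $\mathcal{L}_{p^{*}}$-space'' in Theorem~\ref{teorbugeral} becomes precisely ``$X$ is an $\mathcal{L}_{2}$-space'', which is the hypothesis in the corollary. Moreover, by the convention fixed in the introduction (where $\Pi_{al.s}$ is defined to mean $\Pi_{al.s.2}$), the domain class $\Pi_{al.s.p}(X,Y)$ coincides with $\Pi_{al.s}(X,Y)$ when $p=2$. Since the range $2\leq p<\infty$ of Theorem~\ref{teorbugeral} includes the endpoint $p=2$, and the restriction $1<q\leq\infty$ is the same in both statements, the inclusion $\Pi_{al.s}(X,Y)\subseteq\mathcal{D}_{q}(X,Y)$ follows immediately.

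I do not anticipate any genuine obstacle, since the corollary is a direct specialization; the only point to check is the bookkeeping of indices, namely that $p=2$ forces $p^{*}=2$ and $\Pi_{al.s.2}=\Pi_{al.s}$. To justify the surrounding claim that this strengthens Bu's Theorem~\ref{teorbu}, I would finally record two comparisons. On the domain side, by~(\ref{inccc}) we have $\bigcup_{1\leq p<\infty}\Pi_{p}(X,Y)\subseteq\Pi_{al.s}(X,Y)$, so the class $\Pi_{al.s}(X,Y)$ is at least as large as any single $\Pi_{p}(X,Y)$. On the hypothesis side, every Hilbert space is an $\mathcal{L}_{2}$-space, so requiring $X$ to be an $\mathcal{L}_{2}$-space is weaker than requiring $X$ to be Hilbert. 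Combining these two observations shows that the corollary recovers the conclusion $\Pi_{p}(X,Y)\subseteq\mathcal{D}_{q}(X,Y)$ of Theorem~\ref{teorbu} under strictly more general assumptions, which is the intended improvement.
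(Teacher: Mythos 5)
Your proposal is correct and is exactly the paper's route: the corollary is obtained by specializing Theorem~\ref{teorbugeral} to $p=2$ (so that $p^{*}=2$ and $\Pi_{al.s.2}=\Pi_{al.s}$), with the ``improvement'' over Theorem~\ref{teorbu} justified, just as in the paper, by the inclusion~(\ref{inccc}) together with the fact that every Hilbert space is an $\mathcal{L}_{2}$-space.
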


In \cite[Corollary 9.12(b)]{djt} there is a characterization of closed subspace of $L_{r}$, with $1\leq r<\infty$, which is: a Banach space $Y$ is an isomorphic to a closed subspace of $L_{r}$ if, and only if, for all Banach space $X$
\[\mathcal{D}_{r^{*}}\left(  X,Y\right)\subseteq\Pi_{r}\left(  X,Y\right).\] 

In the next corollary, we show a coincidence result among the classes 
of the Cohen strongly $r^{*}$-summing, absolutely $r$-summing and almost summing operators.

\begin{corollary} Let $X$ be an $\mathcal{L}_{2}$-space and $Y$ be an isomorphic to a closed subspace of $L_{r}$, with $1\leq r<\infty$. Then 
	\[\Pi_{al.s}\left(  X,Y\right)=\mathcal{D}_{r^{*}}\left(  X,Y\right)=\Pi_{r}\left(  X,Y\right).\]
	
\end{corollary}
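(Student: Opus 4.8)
The plan is to obtain the two claimed equalities by closing a cycle of inclusions among the three classes, namely
\[
\Pi_{al.s}(X,Y)\subseteq\mathcal{D}_{r^{*}}(X,Y)\subseteq\Pi_{r}(X,Y)\subseteq\Pi_{al.s}(X,Y);
\]
once this chain is in place, every inclusion in it is forced to be an equality and we are done. Before assembling the pieces I would first dispose of the parameter bookkeeping: since $1\leq r<\infty$ we have $r^{*}\in(1,\infty]$, so on the one hand $\mathcal{D}_{r^{*}}$ is genuinely defined (recall that Cohen strongly $p$-summing requires $1<p\leq\infty$), and on the other hand $q=r^{*}$ is an admissible value in the range $1<q\leq\infty$ needed below. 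This also isolates the only delicate endpoint, $r=1$ (equivalently $r^{*}=\infty$).

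For the first inclusion I would invoke the corollary ``Bu's Theorem improved'' with the choice $q=r^{*}$: because $X$ is an $\mathcal{L}_{2}$-space, that corollary yields $\Pi_{al.s}(X,Y)\subseteq\mathcal{D}_{r^{*}}(X,Y)$. The degenerate case $r=1$, which forces $r^{*}=\infty$, is covered precisely because the improved Bu corollary is stated for the full range $1<q\leq\infty$. For the second inclusion I would appeal to the characterization \cite[Corollary 9.12(b)]{djt}: since $Y$ is isomorphic to a closed subspace of $L_{r}$ with $1\leq r<\infty$, we have $\mathcal{D}_{r^{*}}(X,Y)\subseteq\Pi_{r}(X,Y)$ for \emph{every} Banach space $X$, in particular for our $\mathcal{L}_{2}$-space $X$. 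Finally, the third inclusion $\Pi_{r}(X,Y)\subseteq\Pi_{al.s}(X,Y)$ is immediate from (\ref{inccc}). Concatenating the three gives the cycle above, hence the two equalities.

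I do not expect a genuine obstacle here, since the statement is essentially an assembly of three already-established facts rather than a new argument. The single point that deserves explicit verification is the endpoint $r=1$: one must confirm that both the improved Bu corollary (through its allowance of $q=\infty$) and the subspace-of-$L_{r}$ characterization of \cite{djt} remain valid there, so that the parameter $r^{*}=\infty$ causes no breakdown. Checking that these ranges line up is, in effect, the whole of the work.
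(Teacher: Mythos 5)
Your proposal is correct and follows essentially the same route as the paper: the paper also closes the cycle $\Pi_{al.s}(X,Y)\subseteq\mathcal{D}_{r^{*}}(X,Y)\subseteq\Pi_{r}(X,Y)\subseteq\Pi_{al.s}(X,Y)$, obtaining the first inclusion from Theorem \ref{teorbugeral} with $p=2$, $q=r^{*}$ (which is exactly the content of the ``Bu's Theorem improved'' corollary you cite), the second from \cite[Corollary 9.12(b)]{djt}, and the third from (\ref{inccc}). Your extra attention to the endpoint $r=1$, $r^{*}=\infty$ is a reasonable bookkeeping check but introduces nothing beyond what the stated ranges of those results already cover.
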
	

\begin{proof} Let $X$ be an $\mathcal{L}_{2}$-space. So considering $p=2$ and $q=r^{*}$ in Theorem \ref{teorbugeral} we have that 
	\[\Pi_{al.s}\left(  X,Y\right)\subseteq\mathcal{D}_{r^{*}}\left(  X,Y\right).\]
	Since $Y$ is an isomorphic to a closed subspace of $L_{r}$, it follows from \cite[Corollary 9.12(b)]{djt} and (\ref{inccc}) that $\mathcal{D}_{r^{*}}\left(  X,Y\right)\subseteq\Pi_{r}\left(  X,Y\right)\subseteq\Pi_{al.s}\left(  X,Y\right).$
	Therefore,
	\[\Pi_{al.s}\left(  X,Y\right)=\mathcal{D}_{r^{*}}\left(  X,Y\right)=\Pi_{r}\left(  X,Y\right).\]

\end{proof}


\section{Extending Kwapie\'{n}'s Theorem}

The following theorem is the main result of this section. It is an improvement of Kwapie\'{n}’s theorem in \cite{Kwapien}.

\begin{theorem}\label{teorkwapienmelhorado} The following properties of Banach space X are equivalent.
	
	(i) The space $X$ is isomorphic to a Hilbert space.
	
	(ii) For every $1<q\leq \infty$ and every Banach space $Y$, $\Pi_{al.s}(X,Y)\subseteq\mathcal{D}_{q}(X,Y)$.
	
	(iii) For every Banach space $Y$, 
	$\Pi_{al.s}(X,Y)\subseteq\mathcal{D}_{2}(X,Y)$.
\end{theorem}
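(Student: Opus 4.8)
The plan is to prove the equivalence by establishing the cyclic chain $(i)\Rightarrow(ii)\Rightarrow(iii)\Rightarrow(i)$. The point I would stress from the outset is that essentially all of the analytic labor has already been carried out: the forward direction is packaged in Theorem \ref{teorbugeral}, and the reverse direction will be reduced to the classical Kwapie\'{n} theorem (Theorem \ref{teorkwapien}). So the proof should amount to bookkeeping plus one conceptual observation, with no new estimates.

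For $(i)\Rightarrow(ii)$, I would first record the standard fact that a space $X$ isomorphic to a Hilbert space is an $\mathcal{L}_2$-space. Fixing an isomorphism $\Phi\colon X\to H$ onto a Hilbert space and an arbitrary finite-dimensional $E\subseteq X$, the image $\Phi(E)$ is a finite-dimensional, hence $\ell_2^{\dim E}$-isometric, subspace of $H$; pulling this isometry $\psi\colon\Phi(E)\to\ell_2^{\dim E}$ back through $\Phi$ gives $v:=\psi\circ\Phi|_E\colon E\to\ell_2^{\dim E}$ with $\|v\|\,\|v^{-1}\|\le\|\Phi\|\,\|\Phi^{-1}\|$, so one may take $F=E$ in the definition of an $\mathcal{L}_2$-space. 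With $X$ an $\mathcal{L}_2$-space I would then invoke Theorem \ref{teorbugeral} with $p=2$ (so that $p^*=2$ and $\Pi_{al.s.2}=\Pi_{al.s}$), which yields exactly $\Pi_{al.s}(X,Y)\subseteq\mathcal{D}_q(X,Y)$ for every $1<q\le\infty$ and every Banach space $Y$, i.e. condition $(ii)$. The implication $(ii)\Rightarrow(iii)$ is then immediate by specializing to $q=2$.

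The only implication carrying real content is $(iii)\Rightarrow(i)$, and the key idea is to reduce it to Kwapie\'{n}'s theorem. Using the inclusion (\ref{inccc}) we have $\Pi_2(X,Y)\subseteq\Pi_{al.s}(X,Y)$ for every $Y$; chaining this with hypothesis $(iii)$ gives $\Pi_2(X,Y)\subseteq\Pi_{al.s}(X,Y)\subseteq\mathcal{D}_2(X,Y)$ for every Banach space $Y$, which is precisely condition $(ii)$ of Theorem \ref{teorkwapien}. That theorem then forces $X$ to be isomorphic to a Hilbert space, closing the loop. I expect the only genuine obstacle to be conceptual rather than computational: one must resist attempting a direct proof of $(iii)\Rightarrow(i)$ and instead observe that, by (\ref{inccc}), condition $(iii)$ is a priori stronger than the hypothesis of Kwapie\'{n}'s theorem, since the almost summing class contains the $2$-summing class. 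Once this is noticed the implication follows with no analysis at all, and the substantive work of the equivalence lives entirely in $(i)\Rightarrow(ii)$ (through Theorem \ref{teorbugeral}) and in Kwapie\'{n}'s theorem itself.
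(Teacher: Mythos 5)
Your proposal is correct and follows essentially the same route as the paper: $(i)\Rightarrow(ii)$ via the $\mathcal{L}_2$-space observation and Theorem \ref{teorbugeral} with $p=2$, the trivial specialization $(ii)\Rightarrow(iii)$, and $(iii)\Rightarrow(i)$ by chaining the inclusion (\ref{inccc}) with the hypothesis to land in the hypothesis of Kwapie\'{n}'s Theorem \ref{teorkwapien}. The only difference is that you spell out why an isomorph of a Hilbert space is an $\mathcal{L}_2$-space, which the paper simply asserts.
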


\begin{proof}$(i)\Rightarrow(ii)$ If $X$ is isomorphic to a Hilbert space, then it is an $\mathcal{L}_{2}$-space. By Theorem \ref{teorbugeral} it follows that $\Pi_{al.s}(X,Y)\subseteq\mathcal{D}_{q}(X,Y)$ for all Banach space $Y$ and all $1<q\leq \infty$.
	
	$(ii)\Rightarrow(iii)$ It is obvious.
	
	$(iii)\Rightarrow(i)$ For every Banach space $Y$, suppose that $u\in \Pi_{2}(X,Y)$. By (\ref{inccc}), $u\in \Pi_{al.s}(X,Y)$. Now using the hypothesis, it follows that $u\in \mathcal{D}_{2}(X,Y)$ and hence Kwapie\'{n}’s theorem implies that $X$ is isomorphic to a Hilbert space. 
	
\end{proof}

\begin{corollary} If $X$ is isomorphic to a Hilbert space and $Y$ has type 2, then
	 \[\Pi_{al.s}\left(  X,Y\right)=\mathcal{D}_{2}\left(  X,Y\right).\]
	
\end{corollary}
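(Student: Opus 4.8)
The plan is to prove the two inclusions separately. Since $\Pi_{al.s}(\cdot,Y)$ and $\mathcal D_2(\cdot,Y)$ are stable under precomposition with an isomorphism, I may assume outright that $X$ is a Hilbert space with orthonormal basis $(e_i)$. The inclusion $\Pi_{al.s}(X,Y)\subseteq\mathcal D_2(X,Y)$ is then immediate from Theorem \ref{teorkwapienmelhorado} (case $q=2$), since a Hilbert space is isomorphic to a Hilbert space. All the work is therefore in the reverse inclusion $\mathcal D_2(X,Y)\subseteq\Pi_{al.s}(X,Y)$, and this is where the type $2$ hypothesis on $Y$ must be used.

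So fix $u\in\mathcal D_2(X,Y)$; by Cohen's duality (recalled just before Theorem \ref{theoreminverse}) this means $u^{*}\in\Pi_2(Y^{*},X^{*})$. Because the domain is Hilbert, a weakly $2$-summable unit sequence in $X$ is exactly $(B e_i)_i$ for a contraction $B$ on $X$, so the almost summing norm of $u$ equals $\sup_{\|B\|\le1}\ell(uB)$, where $\ell(v):=(\int_0^1\|\sum_i r_i(t)v(e_i)\|^2\,dt)^{1/2}$. As $\ell(uB)\le\ell(u)\|B\|$ and $Y$ has finite cotype (type $2$ forces nontrivial type, hence finite cotype), the single quantity $\ell(u)$ controls the almost summing norm up to a constant, and it suffices to show $\ell(u)<\infty$.

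I would estimate $\ell(u)$ by dualizing the Rademacher average against $L_2([0,1];Y^{*})$. This writes $\ell(u)$ as a supremum of sums $\sum_i\langle h_i,u(e_i)\rangle$ in which $\sum_i r_i h_i$ ranges over a fixed multiple of the unit ball of the first Rademacher chaos of $L_2(Y^{*})$; the passage to the coefficients $(h_i)$ is legitimate because $Y$, having type $2$, is $K$-convex. The crucial gain is that $Y^{*}$ has cotype $2$ by \cite[Proposition 11.10]{djt}, so $(\sum_i\|h_i\|^2)^{1/2}\le C_2(Y^{*})\,\|\sum_i r_i h_i\|_{L_2(Y^{*})}$ stays uniformly bounded; hence $\Phi\colon X\to Y^{*}$, $\Phi(e_i)=h_i$, is Hilbert--Schmidt, i.e. $2$-summing with $\pi_2(\Phi)$ under control. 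Now the sum to be bounded is a trace, $\sum_i\langle h_i,u(e_i)\rangle=\sum_i\langle u^{*}\Phi(e_i),e_i\rangle=\operatorname{tr}(u^{*}\Phi)$, and $u^{*}\Phi\colon X\to X^{*}\cong X$ is a composition of two $2$-summing operators, hence nuclear (see \cite{djt}); therefore $|\operatorname{tr}(u^{*}\Phi)|\le\pi_2(u^{*})\,\pi_2(\Phi)$. Taking the supremum gives $\ell(u)<\infty$, so $u\in\Pi_{al.s}(X,Y)$, and the two inclusions yield the claimed equality. (Working with Gaussian rather than Rademacher averages changes nothing, by Kahane's inequality.)

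The step I expect to be the real obstacle is the last one: recognizing that the dual pairing is precisely $\operatorname{tr}(u^{*}\Phi)$ and that the two-fold $2$-summability of $u^{*}\Phi$ forces it to be nuclear, so its trace is finite. The reduction to a Hilbert domain, the description of $\Pi_{al.s}$ through one basis average, and the implication type $2\Rightarrow$ cotype $2$ for $Y^{*}$ are routine once this identity is in hand. Note that type $2$ of $Y$ is used twice: via $K$-convexity to carry out the dualization, and via cotype $2$ of $Y^{*}$ to make $\Phi$ Hilbert--Schmidt.
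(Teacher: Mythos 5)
Your overall architecture is the natural one, and your first inclusion is exactly the paper's: $\Pi_{al.s}(X,Y)\subseteq\mathcal{D}_{2}(X,Y)$ follows from Theorem \ref{teorkwapienmelhorado} with $q=2$. For the reverse inclusion, however, the paper does nothing like your computation: it simply cites \cite[Corollary 12.21]{djt}, which states precisely that $\mathcal{D}_{2}(X,Y)\subseteq\Pi_{al.s}(X,Y)$ whenever $Y$ has type $2$ (and for arbitrary $X$, not just Hilbert $X$). You instead attempt to reprove this inclusion by hand, and your argument has a genuine gap at its central step.

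The gap is the sentence in which, from $(\sum_i\|h_i\|^2)^{1/2}\le C_2(Y^{*})\,\|\sum_i r_ih_i\|_{L_2(Y^{*})}$ being bounded, you conclude that $\Phi(e_i)=h_i$ is ``Hilbert--Schmidt, i.e.\ $2$-summing with $\pi_2(\Phi)$ under control.'' For an operator from a Hilbert space into a Banach space, square-summability of the images of \emph{one} orthonormal basis does not imply $2$-summability, even when the target has cotype $2$. Concretely, $\Phi_n=n^{-1/2}\,\mathrm{id}:\ell_2^n\to\ell_1^n$ satisfies $\sum_{i=1}^n\|\Phi_n e_i\|_1^2=1$, yet for the orthonormal basis $f_j=n^{-1/2}(\varepsilon_{j1},\dots,\varepsilon_{jn})$ given by the rows of a Hadamard matrix one has $\|\Phi_n f_j\|_1=1$ for every $j$, so $\pi_2(\Phi_n)\ge\sqrt{n}$, while the cotype $2$ constant of $\ell_1^n$ is bounded independently of $n$. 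Hence the quantity you carry forward, $\sum_i\|h_i\|^2$, is too weak to control $\pi_2(\Phi)$, and without that the nuclearity estimate $|\operatorname{tr}(u^{*}\Phi)|\le\pi_2(u^{*})\,\pi_2(\Phi)$ --- the step you single out as the crux --- has nothing to feed on. The argument is repairable, but only by using the stronger information you had one line earlier and then discarded: $K$-convexity gives a bound on $\|\sum_i r_ih_i\|_{L_2(Y^{*})}$, i.e.\ on the Rademacher average of $\Phi$, and converting \emph{that} into a bound on $\pi_2(\Phi)$ is exactly the nontrivial theorem that Rademacher-bounded (almost summing) operators into a cotype $2$ space are $2$-summing, \cite[Corollary 12.7]{djt}, combined with the Gaussian ideal property and Gaussian/Rademacher comparison under finite cotype to get uniformity over contractions. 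In other words, the step you treat as routine is where the real theorem hides; once you are citing results at that level, you may as well cite \cite[Corollary 12.21]{djt} directly, as the paper does. Two smaller inaccuracies in the same spirit: the ideal property $\ell(uB)\le\ell(u)\|B\|$ holds for Gaussian, not Rademacher, averages (your finite-cotype remark is what bridges the two), and the Gaussian/Rademacher comparison is not ``Kahane's inequality,'' which concerns only a change of exponent.
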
	

\begin{proof} Since $X$ is isomorphic to a Hilbert, Theorem \ref{teorkwapienmelhorado} ensures that $\Pi_{al.s}\left(  X,Y\right)\subseteq\mathcal{D}_{2}\left(  X,Y\right)$. Now as $Y$ has type 2, it follows from \cite[Corollary 12.21]{djt} that $\mathcal{D}_{2}\left(  X,Y\right)\subseteq\Pi_{al.s}\left(  X,Y\right).$ Thus, the proof is complete.

\end{proof}


\begin{thebibliography}{99}                                                                                               %
	
	\bibitem {Bennett}G. Bennett, {\it Schur multipliers}, Duke Math. J., \textbf{44} (1977), 603--639. 
	
	\bibitem {BBJ}G. Botelho, H.A. Braunss and H. Junek, \textit{Almost
		$p$-summing polynomials and multilinear mappings}, Arch. Math., \textbf{76} (2001), 109--118.
	
	\bibitem {Bu}Q. Bu, {\it Some mapping properties of p-summing operators with hilbertian domain}, Contemp. Math., \textbf{328} (2003),	145--149.
	
	\bibitem {BK}Q. Bu and P. Kranz, {\it Some mapping properties of $p$-summing adjoint operators}, J. Math. Anal. Appl., {\bf 303} (2005), 585--590.
	
	\bibitem {Cohen70} J. S. Cohen, {\it A characterization of inner-product spaces using absolutely 2-summing operators}, Studia Math., \textbf{38} (1970), 271--276.
	
	\bibitem {Cohen} J. S. Cohen, {\it Absolutely $p$-summing, $p$-nuclear operators and their conjugates}, Math. Ann., \textbf{201} (1973), 177--200.
	
	\bibitem {djt}J. Diestel, H. Jarchow and A. Tonge, \textit{Absolutely Summing	Operators}, Cambridge Studies in Advanced Mathematics, Cambridge University Press, 1995.
	
	\bibitem {Kwapien}S. Kwapie\'{n}, {\it A linear topological characterization of inner product space}, Studia Math., \textbf{38} (1970). 277--278.
	

	\bibitem {MPS}R. Macecdo, D. Pellegrino and J. Santos, {\it Nonlinear variants of a theorem of Kwapie\'{n}}, to appear in Isr. J. Math.


\end{thebibliography}
\end{document}